\newtheoremstyle{mio}%
	{}{} 
	{\itshape}{} 
	{\bfseries}{.}{ } 
	{#1 #2\thmnote{~\mdseries(#3)}} 
\theoremstyle{mio}
\newtheorem{teor}{Theorem}[section]
\newtheorem{prop}[teor]{Proposition}
\newtheorem{lemma}[teor]{Lemma}
\newtheoremstyle{definition2}%
	{}{} 
	{}{} 
	{\bfseries}{.}{ } 
	{#1 #2\thmnote{\mdseries~ #3}} 
\theoremstyle{definition2}
\newtheorem{oss}[teor]{Remark}
\newcommand{\insstar}{\mathrm{Star}}
\newcommand{\valut}{\mathbf{v}}
\newcommand{\nondiv}{\mathcal{G}}
\title{Star operations on Kunz domains}
\author{Dario Spirito}
\date{\today}
\address{Dipartimento di Matematica e Fisica, Universit\`a degli Studi
``Roma Tre'', Roma, Italy}
\email{spirito@mat.uniroma3.it}
\subjclass[2010]{13A15, 13E05, 13G05}
\keywords{Star operations; pseudo-symmetric semigroups; Kunz domains; star regular domains}
\begin{document}
\begin{abstract}
We study star operations on Kunz domains, a class of analytically irreducible, residually rational domains associated to pseudo-symmetric numerical semigroups, and we use them to refute a conjecture of Houston, Mimouni and Park. We also find an estimate for the number of star operations in a particular case, and a precise counting in a sub-case.
\end{abstract}

\maketitle

\section{Introduction}
Let $D$ be an integral domain with quotient field $K$, and let $\insfracid(D)$ be the set of \emph{fractional ideals} of $D$, i.e., the set of $D$-submodules $I$ of $K$ such that $xI\subseteq D$ for some $x\in K\setminus\{0\}$.

A \emph{star operation} on $D$ is a map $\star:\insfracid(D)\longrightarrow\insfracid(D)$, $I\mapsto I^\star$, such that, for every $I,J\in\insfracid(D)$ and every $x\in K$:
\begin{itemize}
\item $I\subseteq I^\star$;
\item if $I\subseteq J$, then $I^\star\subseteq J^\star$;
\item $(I^\star)^\star=I^\star$;
\item $x\cdot I^\star=(xI)^\star$;
\item $D=D^\star$.
\end{itemize}
A fractional ideal $I$ is \emph{$\star$-closed} if $I=I^\star$.

The easiest example of a non-trivial star operation is the \emph{$v$-operation} $v:I\mapsto(D:(D:I))$, where if $I,J\in\insfracid(D)$ we define $(I:J):=\{x\in K\mid xJ\subseteq I\}$. An ideal that is $v$-closed is said to be \emph{divisorial}; if $I$ is divisorial and $\star$ is any other star operation then $I=I^\star$. We denote by $d$ the identity, which is obviously a star operation.

Recently, the cardinality of the set $\insstar(D)$ of the star operations on $D$ has been studied, especially in the case of Noetherian \cite{houston_noeth-starfinite,starnoeth_resinfinito} and Pr\"ufer domains \cite{twostar,hmp_finite}. In particular, Houston, Mimouni and Park started studying the relationship between the cardinality of $\insstar(D)$ and the cardinality of $\insstar(T)$, where $T$ is an overring of $D$ (an \emph{overring} of $D$ is a ring comprised between $D$ and $K$) \cite{hmp-overnoeth,hmp-overprufer}: they called a domain \emph{star regular} if $|\insstar(D)|\geq|\insstar(T)|$ for every overring of $T$. While even simple domains may fail to be star regular (for example, there are domains with just one star operations having an overring with infinitely many star operations \cite[Example 1.3]{hmp-overnoeth}), they conjectured that every one-dimensional local Noetherian domain $D$ such that $1<|\insstar(D)|<\infty$ is star regular, and proved it when the residue field of $D$ is infinite \cite[Corollary 1.18]{hmp-overnoeth}.

In this context, a rich source of examples are \emph{semigroup rings}, that is, subrings of the power series ring $K[[X]]$ (where $K$ is a field, usually finite) of the form $K[[S]]:=K[[X^S]]:=\{\sum_ia_iX^i\mid a_i=0$ for all $i\notin S\}$, where $S$ is a numerical semigroup (i.e., a submonoid $S\subseteq\insN$ such that $\insN\setminus S$ is finite). Star operations can also be defined on numerical semigroups \cite{semigruppi_main}, and there is a link between star operations on $S$ and star operations on $K[[S]]$: for example, every star operation on $S$ induces a star operation on $K[[S]]$, and $|\insstar(S)|=1$ if and only if $|\insstar(K[[S]])|=1$ \cite[Theorem 5.3]{semigruppi_main}, with the latter result corresponding to the equivalence between $S$ being symmetric and $K[[S]]$ being Gorenstein \cite{bass_ubiqGor,kunz_value}. A detailed study of star operations on some numerical semigroup rings was carried out in \cite{white-tesi-sgr}.

In this paper, we study of star operations on \emph{Kunz domains}, which are, roughly speaking, a generalization of rings in the form $K[[S]]$ where $S$ is a pseudo-symmetric semigroup (see the beginning of the next section for the definitions). We show that, if $R$ is a Kunz domain whose residue field is finite and the length of $\overline{R}/R$ is at least 4 (where $\overline{R}$ is the integral closure of $R$) then $R$ is a counterexample to Houston-Mimouni-Park's conjecture; that is, $R$ satisfies $1<|\insstar(R)|<\infty$ but there is an overring $T$ of $R$ with more star operations than $R$. In Section \ref{sect:ex}, we also study more deeply one specific class of domains, linking the cardinality of $\insstar(R)$ with the set of vector subspaces of a vector space over the residue field of $R$, and calculate the cardinality of $\insstar(R)$ when the value semigroup of $R$ is $\langle 4,5,7\rangle$.

We refer to \cite{rosales_libro} for information about numerical semigroup, and to \cite{fontana_maximality} for the passage from numerical semigroup to one-dimensional local domains.

\section{Kunz domains}
Let $S$ be a numerical semigroup, and let $g:=g(S):=\sup(\insZ\setminus S)$. We say that $S$ is a \emph{pseudo-symmetric semigroup} if $g$ is even and, for every $a\inN$, $a\neq g/2$, either $a\in S$ or $g-a\in S$. If $a_1,\ldots,a_n$ are coprime integers, we denote by $\langle a_1,\ldots,a_n\rangle$ the numerical semigroup generated by $a_1,\ldots,a_n$, i.e., $\langle a_1,\ldots,a_n\rangle=\{\lambda_1a_1+\cdots+\lambda_na_n\mid\lambda_1,\ldots,\lambda_n\inN\}$.

Let $(V,M_V)$ be a discrete valuation ring with associated valuation $\valut$. Let $(R,M_R)$ be a local subring of $V$ with the following properties:
\begin{itemize}
\item $R$ and $V$ have the same quotient field;
\item the integral closure of $R$ is $V$;
\item $R$ is Noetherian;
\item the conductor ideal $(R:V)$ is nonzero;
\item the inclusion $R\hookrightarrow V$ induces an isomorphism of residue fields $R/M_R\longrightarrow V/M_V$.
\end{itemize}
Equivalently, let $R$ be an analytically irreducible, residually rational one-dimensional Noetherian local domain having integral closure $V$. Note that for every such $R$ the set $\valut(R):=\{\valut(r)\mid r\in R\}$ is a numerical semigroup. We state explicitly a property which we will be using many times.
\begin{prop}[{\protect{\cite[Corollary to Proposition 1]{matsuoka_degree}}}]
Let $R$ be as above, and let $I\subseteq J$ be $R$-submodules of the quotient field of $R$. Then,
\begin{equation*}
\ell_R(J/I)=|\valut(J)\setminus\valut(I)|,
\end{equation*}
where $\ell_R$ is the length of an $R$-module.
\end{prop}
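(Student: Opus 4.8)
The plan is to establish a ``separation'' statement --- that a strict inclusion of $R$-submodules of $K$ strictly enlarges the value set --- and then to combine it with a d\'evissage along a composition series, the length-one case being settled by a direct valuation estimate. The two hypotheses that single out this class of rings (nonvanishing of the conductor $(R:V)$ and the identification of residue fields) will be used exactly once, in the separation step.

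First I would prove: if $I\subsetneq J$ are $R$-submodules of $K$ with $I\neq 0$, then $\valut(I)\subsetneq\valut(J)$. Take $x\in J\setminus I$. If $\valut(x)\notin\valut(I)$ we are done; otherwise pick $y\in I$ with $\valut(y)=\valut(x)$, so that $x/y$ is a unit of $V$, and --- using residual rationality --- lift the class of $x/y$ in $V/M_V$ to a unit $u\in R$; then $x-uy$ lies in $J\setminus I$ and has $\valut(x-uy)>\valut(x)$. Iterating yields elements of $J\setminus I$ of strictly increasing valuation. This cannot go on forever: writing $(R:V)=M_V^c$ and fixing any nonzero $a\in I$, one has $aM_V^c\subseteq aR\subseteq I$, so that $I$ contains every element of $K$ of valuation at least $\valut(a)+c$; after finitely many steps the construction would therefore push an element into $I$, a contradiction. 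Hence the iteration halts, i.e. some element of $J\setminus I$ has valuation outside $\valut(I)$.

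Now let $I\subseteq J$ with $n:=\ell_R(J/I)<\infty$. If $n=0$ both sides vanish, so assume $n\geq 1$; then $I\neq 0$, since otherwise $\ell_R(J/I)=\ell_R(J)$ would be infinite (any nonzero submodule of $K$ being torsion-free over the one-dimensional domain $R$). Choose a composition series $I=I_0\subsetneq\cdots\subsetneq I_n=J$. The chain $\valut(I_0)\subseteq\cdots\subseteq\valut(I_n)$ exhibits $\valut(J)\setminus\valut(I)$ as the disjoint union of the sets $\valut(I_{k+1})\setminus\valut(I_k)$, so it suffices to show that each of these has exactly one element, i.e. to treat $n=1$. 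The separation step supplies at least one new value. For at most one: if $\alpha<\beta$ both lay in $\valut(J)\setminus\valut(I)$, choose $a,b\in J$ with $\valut(a)=\alpha$ and $\valut(b)=\beta$; since $J/I$ is simple and $b\notin I$ we have $J=I+Rb$, so $a=c+rb$ with $c\in I$ and $r\in R$, whence either $\valut(c)<\valut(rb)$ and $\valut(a)=\valut(c)\in\valut(I)$, or $\valut(c)\geq\valut(rb)$ and $\valut(a)\geq\valut(r)+\beta\geq\beta>\alpha$ --- both impossible since $\valut(a)=\alpha\notin\valut(I)$. Finally, when $\ell_R(J/I)=\infty$ (in particular whenever $I=0\neq J$), the module $J$ admits chains over $I$ of every finite length, so iterating the separation step makes $\valut(J)\setminus\valut(I)$ infinite, and both sides of the desired equality are infinite.

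I expect the separation step to be the only real obstacle: it is where the finiteness of $V$ over $R$ (through $(R:V)\neq 0$) and the residue field identification genuinely enter, and the successive-approximation argument, together with the remark that $I$ contains all elements of sufficiently large valuation, is precisely what forces the valuation to detect every submodule. Granting it, the d\'evissage and the bookkeeping for modules that are not fractional ideals or have infinite colength are routine.
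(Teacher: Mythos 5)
The paper does not prove this proposition at all: it is imported verbatim from Matsuoka (the cited ``Corollary to Proposition~1''), so there is no in-paper argument to compare against. Your proof is correct and is essentially the standard one: the separation step (using residual rationality to lift $x/y$ to a unit of $R$ and the conductor $(R:V)=M_V^c$ to guarantee that $I$ absorbs all elements of sufficiently large valuation) is exactly where the hypotheses on $R$ enter, and the d\'evissage plus the length-one computation are routine. The only points worth a sentence in a written version are that $(R:V)$ is a $V$-ideal (hence a power of $M_V$) and that the degenerate subcases $r=0$ or $c=0$ in the length-one step are covered by the convention $\valut(0)=\infty$.
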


We say that $R$ is a \emph{Kunz domain} if $\valut(R)$ is a pseudo-symmetric semigroup \cite[Proposition II.1.12]{fontana_maximality}.

From now on, we suppose that $R$ is a Kunz domain, and we set $g:=g(\valut(R))$ and $\tau:=g/2$. The hypotheses on $R$ guarantee that, if $x\in V$ is such that $\valut(x)>g$, then $x\in R$ \cite[Theorem, p.749]{kunz_value}.

\begin{lemma}\label{lemma:defT}
Let $y\in V$ be an element of valuation $g$, and let $T:=R[y]$. Then:
\begin{enumerate}[(a)]
\item $T$ contains all elements of valuation $g$;
\item $\valut(T)=\valut(R)\cup\{g\}$;
\item $\ell_R(T/R)=1$;
\item $T=R+yR$.
\end{enumerate}
\end{lemma}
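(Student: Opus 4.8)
The plan is to prove the four statements in the order (d), (a), (b), (c): part (d) makes the structure of $T$ explicit, parts (a) and (b) then follow quickly from it together with the property that $\valut(x)>g$ implies $x\in R$, and (c) is immediate from (b) and the length formula of the Proposition quoted above. For (d): every power $y^k$ with $k\geq 2$ has valuation $kg>g$ (as $g\geq 1$) and hence lies in $R$, so any element $\sum_{k\geq 0}a_ky^k$ of $R[y]$ (with $a_k\in R$) equals $a_0+a_1y$ plus an element of $R$; thus $T=R[y]\subseteq R+Ry$, and since the reverse inclusion is obvious, $T=R+Ry$. Note also that $R\subseteq T$ and $R\neq T$, because $\valut(y)=g\notin\valut(R)$.

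For (a), let $z\in V$ with $\valut(z)=g$, so that $z/y$ is a unit of $V$; by residual rationality choose $u\in R$ with the same image as $z/y$ in $V/M_V=R/M_R$. Then $z/y-u\in M_V$, so (if $z\neq uy$) $\valut(z-uy)=g+\valut(z/y-u)\geq g+1>g$, whence $z-uy\in R$, and in any case $z=uy+(z-uy)\in Ry+R=T$. For (b), the inclusion $\valut(R)\cup\{g\}\subseteq\valut(T)$ holds since $R\subseteq T$ and $y\in T$; conversely, write a nonzero $t\in T$ as $t=a+by$ with $a,b\in R$ by (d). Here $\valut(a)\in\valut(R)$ and $\valut(by)=\valut(b)+g$, which equals $g$ if $b$ is a unit and is $>g$ otherwise. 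If $\valut(a)\neq\valut(by)$ then $\valut(t)$ is the smaller of the two; if $\valut(a)=\valut(by)$ then this common value is $\geq g$ and, belonging to $\valut(R)$, differs from $g$, hence is $>g$, and so is $\valut(t)\geq\valut(a)$. In every case $\valut(t)\in\valut(R)\cup\{g\}$, because every integer larger than $g$ belongs to $\valut(R)$ ($g$ being the Frobenius number of $\valut(R)$); thus $\valut(T)=\valut(R)\cup\{g\}$. Finally (c) follows at once: by (b), $\valut(T)\setminus\valut(R)=\{g\}$, so the quoted Proposition gives $\ell_R(T/R)=1$.

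I do not expect a serious obstacle; the only step needing a little care is the case $\valut(a)=\valut(by)$ in the proof of (b), where one must use that $g$ is the Frobenius number of $\valut(R)$ both to rule out a new valuation below $g$ and to absorb any valuation above $g$ into $\valut(R)$. Everything else is routine manipulation of valuations together with the two quoted facts (Kunz's property and the length formula).
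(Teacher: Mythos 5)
Your proof is correct, and it reverses the logical order of the paper's argument in one place. The paper proves (a) first (by exactly the argument you give: approximate the unit $z/y$ by an element of $R$ using residual rationality, and absorb the error term of valuation $>g$ into $R$), declares (b) trivial, deduces (c) from (b) via Matsuoka's length formula, and then obtains (d) as a consequence of (c): since $R\subsetneq R+yR\subseteq T$ and $\ell_R(T/R)=1$, the two modules must coincide. You instead prove (d) directly at the outset, observing that $y^k$ has valuation $kg>g$ for $k\geq 2$ and hence lies in $R$ by Kunz's property, so that $R[y]=R+Ry$ by inspection of polynomial expressions. This is a perfectly good alternative: it makes (d) independent of the length formula (which you then still need for (c)), and it lets you give a clean, fully explicit verification of (b), which the paper waves through as obvious; your case analysis there (in particular handling $\valut(a)=\valut(by)$ by noting that this common value lies in $\valut(R)$ and is $\geq g$, hence $>g$) is sound. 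Neither route is shorter in any essential way; the paper's is slightly more economical because (d) comes for free once (c) is known, while yours is slightly more self-contained and constructive. No gaps.
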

\begin{proof}
Let $y'\in V$ be another element of valuation $g$. Then, $\valut(y/y')=0$, and thus $c:=y/y'$ is a unit of $V$. Hence, there is a $c'\in R$ such that the images of $c$ and $c'$ in the residue field of $V$ coincide; in particular, $c=c'+m$ for some $m\in M_V$. Hence,
\begin{equation*}
y'=cy=(c'+m)y=c'y+my.
\end{equation*}
Since $c'\in R$, we have $c'y\in R[y]$; furthermore, $\valut(my)=\valut(m)+\valut(y)>\valut(y)=g$, and thus $my\in R$. Hence, $y'\in R[y]$, and thus $R[y]$ contains all elements of valuation $g$.

The fact that $\valut(T)=\valut(R)\cup\{g\}$ is trivial; hence, $\ell_R(T/R)=|\valut(T)\setminus\valut(R)|=1$. The last point follows from the fact that $R+yR$ is an $R$-module, from $R\subsetneq R+yR\subseteq T$ and from $\ell_R(T/R)=1$.
\end{proof}

In particular, the previous proposition shows that $T$ is independent from the element $y$ chosen. From now on, we will use $T$ to denote this ring.

\medskip

We denote by $\insfracid_0(R)$ the set of $R$-fractional ideals $I$ such that $R\subseteq I\subseteq V$. If $I$ is any fractional ideal over $R$, and $\alpha\in I$ is an element of minimal valuation, then $\alpha^{-1}I\in\insfracid_0(R)$; hence, the action of any star operation is uniquely determined by its action on $\insfracid_0(R)$. Furthermore, $V^\star=V$ for all $\star\in\insstar(R)$ (since $(R:(R:V))=V$) and thus $I^\star\in\insfracid_0(R)$ for all $I\in\insfracid_0(R)$, i.e., $\star$ restricts to a map from $\insfracid_0(R)$ to itself.

To analyze star operations, we want to subdivide them according to whether they close $T$ or not. One case is very simple.
\begin{prop}\label{prop:Tneq}
If $\star\in\insstar(R)$ is such that $T\neq T^\star$, then $\star=v$.
\end{prop}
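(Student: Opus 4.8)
The plan is to show that if $\star$ does not fix $T$, then $\star$ must be the $v$-operation by proving that every $I\in\insfracid_0(R)$ is divisorial. Since $\ell_R(T/R)=1$ by Lemma~\ref{lemma:defT}(c), the hypothesis $T\neq T^\star$ forces $T^\star\supsetneq T$, and in fact $T^\star$ must contain an element whose valuation is strictly smaller than $g$ but not in $\valut(R)$; more precisely, I expect that $T^\star$ must contain an element of valuation $\tau=g/2$, because the only value in $\{0,1,\dots,g\}$ that can fail to lie in $\valut(R)\cup\{g\}$ while still giving a module strictly larger than $T$ is $\tau$ (using pseudo-symmetry: for $a\neq\tau$, either $a\in\valut(R)$ or $g-a\in\valut(R)$, and the latter pins down the structure of fractional ideals tightly). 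So the first step is to pin down that $\valut(T^\star)$ contains $\tau$, hence $T^\star$ contains $R$ together with elements of valuation $\tau$ and $g$, and by the length formula $T^\star$ has length at least $2$ over $R$; I would then argue $T^\star$ is exactly $R$ plus all elements of valuation $\geq\tau$ except possibly some gap, and identify it.

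The second step is to use $T^\star\neq T$ to deduce that $M_R$ (or an appropriate small ideal) behaves in a way that propagates closure to all ideals. The key mechanism is the formula $x\cdot I^\star=(xI)^\star$ together with the fact that, in a Kunz domain, a fractional ideal $I\in\insfracid_0(R)$ is determined by its value set $\valut(I)\subseteq\{0,1,\dots,g,g+1,\dots\}$, and by pseudo-symmetry the possible value sets are severely constrained — essentially each $I$ is sandwiched between $R$ and $T$ or is comparable to the "half" behaviour at $\tau$. I would enumerate the (finitely many) fractional ideals $I$ with $R\subseteq I\subseteq V$: they are controlled by which of $\tau$ and the gaps below $g$ they contain, and one checks that each such $I$ satisfies $(R:I)$ and then $(R:(R:I))=I$, i.e., each is divisorial, *unless* $I$ sits strictly between $R$ and $T$ — but there is nothing strictly between $R$ and $T$ since $\ell_R(T/R)=1$. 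The remaining non-divisorial ideal, if any, would have to be $T$ itself; but we are in the case $T^\star\neq T$, and I would show $T^v=T^\star$ forces consistency that makes $\star=v$ on $T$, and then on everything.

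Concretely, I would argue as follows: let $J:=T^\star$. Since $\star$ is a star operation, $J^\star=J$, so $J$ is $\star$-closed; since $J\supsetneq T\supsetneq R$ and lengths are finite, $J$ is one of finitely many modules. I claim $J$ must be divisorial (this uses pseudo-symmetry to show every module properly containing $T$ is divisorial — the only "non-divisorial" phenomenon in a Kunz domain concerns the passage at $R$ or at $T$, and once you are above $T$ you are in the "symmetric-like" range). Given that $J$ is divisorial, $J=J^v\subseteq T^v$, and since $v$ is the largest star operation $T^\star=J\subseteq T^v$; conversely $T^v$ is $\star$-closed only if... — here I instead run the comparison directly: for an arbitrary $I\in\insfracid_0(R)$, write $I^\star$ and note $T\subseteq R:(R:I)$ chains force $I^\star\supseteq$ something, and a short case check (is $T\subseteq I$ or not) reduces to the two cases $I=R$ and $I\supseteq T$; if $I\supseteq T$ then $I$ is divisorial by the claim, so $I^\star=I=I^v$; if $I=R$ then trivially $I^\star=R=I^v$; and the only remaining fractional ideals are translates, already reduced away. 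Hence $\star=v$.

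**Main obstacle.** The hard part will be the "claim": establishing that in a Kunz domain *every* fractional ideal $I$ with $T\subseteq I\subseteq V$ is divisorial, and more delicately, that the existence of a $\star$-closed module strictly above $T$ forces $\star$ to agree with $v$ on the ideals between $R$ and $T$ — i.e., ruling out a hybrid star operation that closes $T$ "too much" from above while doing something nontrivial below. I expect this to hinge on a careful analysis of $(R:I)$ for the relevant modules, using pseudo-symmetry to compute value sets of duals, and on the rigidity coming from $\ell_R(T/R)=1$. Everything else (the translate reduction, the length/value-set dictionary via Proposition~1, the finiteness) is routine.
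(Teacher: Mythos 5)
Your argument hinges on the claim that every fractional ideal $I$ with $T\subseteq I\subseteq V$ is divisorial, so that $T$ is the only non-divisorial object in sight and the hypothesis $T\neq T^\star$ then settles everything. That claim is false, and the strategy collapses with it. Since by Lemma \ref{lemma:nonT} the only ideals in $\insfracid_0(R)$ other than $R$ that do not contain $T$ are the canonical ideals, your claim would leave essentially no room for star operations other than $d$ and $v$; but Section \ref{sect:ex} exhibits, for $\valut(R)=\langle n,\ldots,2n-3,2n-1\rangle$, at least $2^{q^{n-3}}$ star operations, all built from \emph{non-divisorial} ideals that contain $T$ (by Proposition \ref{prop:tauIv}\ref{prop:tauIv:div}, an element of $\nondiv$ is divisorial iff $n-1\in\valut(I)$, and most are not). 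Even for $\langle 3,4,5\rangle$ one has $|\insstar(R)|=4$, so non-divisorial, non-canonical ideals exist and they all contain $T$. Two smaller problems: your case check ``$I=R$ or $I\supseteq T$'' omits the canonical ideals, which are neither; and $T^\star\supsetneq T$ need not contain an element of valuation $\tau$ --- it only needs to contain some gap of $\valut(R)$ other than $g$.

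The paper's proof runs in the opposite direction and needs no classification of divisorial ideals. Assuming $\star\neq v$, it takes a $\star$-closed non-divisorial $I\in\insfracid_0(R)$, uses non-divisoriality of the value set to find $n\in\valut(I)$ with $n+\tau\notin\valut(I)$, and then realizes $T$ as an intersection of two $\star$-closed ideals: $J:=x^{-1}I\cap V$ (with $\valut(x)=n$), which contains all elements of valuation $g$ but none of valuation $\tau$, and the divisorial ideal $L:=(R:M_R)$, whose value set is $\valut(R)\cup\{\tau,g\}$. Then $\valut(J\cap L)=\valut(R)\cup\{g\}$ forces $J\cap L=T$, so $T$ is $\star$-closed; contrapositively $T\neq T^\star$ implies $\star=v$. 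Any repair of your approach would have to produce such a mechanism turning an arbitrary non-divisorial $\star$-closed ideal into a proof that $T$ is $\star$-closed; divisoriality above $T$ is neither available nor needed.
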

\begin{proof}
Suppose $\star\neq v$: then, there is a fractional ideal $I\in\insfracid_0(R)$ that is $\star$-closed but not divisorial. By \cite[Lemma II.1.22]{fontana_maximality}, $\valut(I)$ is not divisorial (in $\valut(R)$) and thus by \cite[Proposition I.1.16]{fontana_maximality} there is an integer $n\in\valut(I)$ such that $n+\tau\notin\valut(I)$.

Let $x\in I$ be an element of valuation $n$, and consider the ideal  $J:=x^{-1}I\cap V$: being the intersection of two $\star$-closed ideals, it is itself $\star$-closed. Since $\valut(x)>0$, every element of valuation $g$ belongs to $J$; on the other hand, by the choice of $n$, no element of valuation $\tau$ can belong to $J$.

Consider now the ideal $L:=(R:M_R)$: then, $L$ is divisorial (since $M_R$ is divisorial) and, using \cite[Proposition II.1.16(1)]{fontana_maximality},
\begin{equation*}
\valut(L)=(\valut(R)-\valut(M_R))=\valut(R)\cup\{\tau,g\}.
\end{equation*}
We claim that $T=J\cap L$: indeed, clearly $J\cap L$ contains $R$, and if $y$ has valuation $g$ then $y\in J\cap L$ by construction; thus $T=R+yR\subseteq J\cap L$. On the other hand, $\valut(J\cap L)\subseteq\valut(J)\cap\valut(L)=\valut(R)\cup\{g\}$, and thus $J\cap L\subseteq T$.

Hence, $T=J\cap L$; since $J$ and $L$ are both $\star$-closed, so is $T$. Therefore, if $T\neq T^\star$ then $\star$ must be the divisorial closure, as claimed.
\end{proof}

Suppose now that $T=T^\star$. Then, $\star$ restricts to a star operation $\star_1:=\star|_{\insfracid(T)}$; the amount of information we lose in the passage from $\star$ to $\star_1$ depends on the $R$-fractional ideals that are not ideals over $T$. We can determine them explicitly.
\begin{lemma}\label{lemma:nonT}
Let $I\in\insfracid_0(R)$, $I\neq R$. Then, the following are equivalent.
\begin{enumerate}[(i)]
\item\label{lemma:nonT:v} $\valut(I)=\valut(R)\cup\{\tau\}$;
\item\label{lemma:nonT:g} $I$ does not contain any element of valuation $g$;
\item\label{lemma:nonT:IT} $IT\neq I$;
\item\label{lemma:nonT:can} $I$ is the canonical ideal of $R$.
\end{enumerate}
Furthermore, in this case, $I^v=(R:M_R)$.
\end{lemma}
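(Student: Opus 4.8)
The plan is to prove $\text{(i)}\Leftrightarrow\text{(iv)}$ from the description of the canonical ideal in terms of value semigroups, and to obtain $\text{(i)}\Leftrightarrow\text{(ii)}\Leftrightarrow\text{(iii)}$ by direct computations with $\valut(I)$.

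For $\text{(i)}\Leftrightarrow\text{(iv)}$: a fractional ideal $I$ with $R\subseteq I\subseteq V$ is a canonical ideal of $R$ precisely when $\valut(I)$ equals the canonical semigroup ideal $K:=\{z\geq 0\mid g-z\notin\valut(R)\}$ (\cite{fontana_maximality}), so it is enough to identify $K$ for a pseudo-symmetric $\valut(R)$. If $z\in\valut(R)$ then $g-z\notin\valut(R)$, since otherwise $g=z+(g-z)\in\valut(R)$; thus $\valut(R)\subseteq K$. If $z\notin\valut(R)$ with $z\neq\tau$, then pseudo-symmetry gives $g-z\in\valut(R)$, so $z\notin K$; finally $g-\tau=\tau\notin\valut(R)$ (as $\tau\in\valut(R)$ would force $g=2\tau\in\valut(R)$), so $\tau\in K$. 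Hence $K=\valut(R)\cup\{\tau\}$, which is exactly condition (i).

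Condition $\text{(i)}\Rightarrow\text{(ii)}$ is clear, because $g\neq\tau$ and $g\notin\valut(R)$ give $g\notin\valut(I)$. For $\text{(ii)}\Rightarrow\text{(i)}$ I would use the hypothesis $I\neq R$: then $\valut(I)\supsetneq\valut(R)$, so we may pick $a\in\valut(I)\setminus\valut(R)$; if such an $a$ could be chosen $\neq\tau$, pseudo-symmetry would give $g-a\in\valut(R)$, and since $\valut(I)$ is closed under addition by $\valut(R)$ we would get $g=a+(g-a)\in\valut(I)$, contradicting (ii). Hence $\valut(I)\setminus\valut(R)=\{\tau\}$, i.e.\ (i) holds. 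For $\text{(ii)}\Leftrightarrow\text{(iii)}$ I would invoke Lemma~\ref{lemma:defT}(d): writing $T=R+yR$ with $\valut(y)=g$ gives $IT=I+yI$, so $IT=I$ if and only if $yI\subseteq I$. If $IT=I$ then $y=1\cdot y\in IT=I$ (since $1\in I$), so $I$ contains an element of valuation $g$, against (ii); thus $\text{(ii)}\Rightarrow\text{(iii)}$. Conversely, suppose $z\in I$ has valuation $g$. To prove $yI\subseteq I$, take $u\in I$: if $\valut(u)>0$ then $\valut(yu)>g$, so $yu\in R\subseteq I$; if $\valut(u)=0$, then $yu/z$ is a unit of $V$, and by residual rationality there is $c\in R$ with $c\equiv yu/z\pmod{M_V}$, so that $\valut(yu-cz)=g+\valut\!\bigl(\tfrac{yu}{z}-c\bigr)>g$ (as $\tfrac{yu}{z}-c\in M_V$) and hence $yu-cz\in R$; as $cz\in I$, we get $yu=cz+(yu-cz)\in I$. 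Therefore $yI\subseteq I$, $IT=I$, and (iii) fails, which gives $\text{(iii)}\Rightarrow\text{(ii)}$.

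For the final statement, note that $\ell_R(I/R)=|\valut(I)\setminus\valut(R)|=1$, so $I/R$ is a simple $R$-module and therefore $M_RI\subseteq R$, i.e.\ $M_R\subseteq(R:I)$. Conversely $(R:I)\subseteq R$ since $R\subseteq I$, and if some $x\in(R:I)$ had $\valut(x)=0$, then choosing $z\in I$ of valuation $\tau$ we would get $xz\in R$ with $\valut(xz)=\tau\notin\valut(R)$, absurd; hence $(R:I)\subseteq M_R$. Thus $(R:I)=M_R$ and $I^v=(R:(R:I))=(R:M_R)$, as claimed. The only delicate point is the converse half of $\text{(ii)}\Leftrightarrow\text{(iii)}$ — deducing $yI\subseteq I$ from the existence of one element of valuation $g$ in $I$ — which is precisely where the approximation afforded by residual rationality is used; everything else is the cited semigroup-theoretic fact about canonical ideals together with routine manipulation of value semigroups.
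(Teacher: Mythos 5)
Your proof is correct and follows essentially the same route as the paper's: pseudo-symmetry to force an element of valuation $g$ into $I$, the residual-rationality approximation to show that such an element makes $I$ absorb multiplication by $y$ (hence by $T$), and the semigroup-theoretic characterization of the canonical ideal; you merely arrange the implications as (i)$\Leftrightarrow$(ii)$\Leftrightarrow$(iii) rather than the paper's cycle (i)$\Rightarrow$(ii)$\Rightarrow$(iii)$\Rightarrow$(i). Your handling of the final claim, computing $(R:I)=M_R$ exactly and applying $(R:-)$, is if anything slightly more direct than the paper's argument sandwiching $I^v$ between $I$ and $(R:M_R)$.
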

\begin{proof}
\ref{lemma:nonT:v} $\Longrightarrow$ \ref{lemma:nonT:g} is obvious.

\ref{lemma:nonT:g} $\Longrightarrow$ \ref{lemma:nonT:IT}: since $R\subseteq I$, there is an element $x$ of $I$ of valuation $0$; hence, $IT$ contains an element of valuation $g$, and thus $IT\neq I$.

\ref{lemma:nonT:IT} $\Longrightarrow$ \ref{lemma:nonT:v}: suppose there is an $x\in I$ such that $\valut(x)\notin\valut(R)\cup\{\tau\}$. Since $\valut(R)$ is pseudo-symmetric, there is an $y\in R$ such that $\valut(y)=g-\valut(x)$; hence, $I$ contains an element (explicitly, $xy$) of valuation $g$ and, by the proof of Lemma \ref{lemma:defT}, it follows that it contains every element of valuation $g$.

Fix now an element $y\in V$ of valuation $g$. Since $IT\neq I$, there are $i\in I$, $t\in T$ such that $it\notin I$. By Lemma \ref{lemma:defT}, there are $r,r'\in R$ such that $t=r+yr'$; hence, $it=i(r+yr')=ir+iyr'$. Both $ir$ and $iyr'$ are in $I$, the former since it belongs to $IR=I$ and the latter because its valuation is at least $g$. However, this contradicts $it\notin I$; therefore, $\valut(I)\subseteq\valut(R)\cup\{\tau\}$.

If $\valut(I)=\valut(R)$, then we must have $I=R$, against our hypothesis; therefore, $\valut(I)=\valut(R)\cup\{\tau\}$.

\ref{lemma:nonT:v} $\iff$ \ref{lemma:nonT:can}: by \cite[Satz 5]{jager}, $I$ is the canonical ideal of $R$ if and only if $\valut(I)$ is the canonical ideal of $\valut(R)$. The claim follows since $\valut(R)$ is pseudo-symmetric and since the canonical ideal of a numerical semigroup $S$ is $S\cup\{x\inN\mid g(S)-x\notin S\}$, which in this case is $S\cup\{\tau\}$.

For the last claim, we first note that $(R:M_R)$ is divisorial (since $M_R$ is divisorial) and that is contains $I$: indeed, if $x\in I$ has valuation $\tau$, and $m\in M_R$, then $xm\in M_R$, for otherwise $m\notin R$ and thus $R+mR$ would be an ideal properly between $R$ and $I$, against $\ell_R(I/R)=1$. Hence, $I^v$ can only be $I$ or $(R:M_R)$. However, $(R:I)\subseteq M_R$, and thus $I^v=(R:(R:I))\supseteq(R:M_R)$. Hence, $I^v=(R:M_R)$.
\end{proof}

\begin{prop}\label{prop:Psi}
The map
\begin{equation*}
\begin{aligned}
\Psi\colon\insstar(R)\setminus\{d,v\} & \longrightarrow\insstar(T)\\
\star & \longmapsto \star|_{\insfracid(T)}
\end{aligned}
\end{equation*}
is well-defined and injective.
\end{prop}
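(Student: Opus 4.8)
The plan is to handle well-definedness and injectivity separately; both exploit the dichotomy furnished by Proposition~\ref{prop:Tneq} and Lemma~\ref{lemma:nonT}.

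\emph{Well-definedness.} Let $\star\in\insstar(R)\setminus\{d,v\}$. Since $\star\neq v$, Proposition~\ref{prop:Tneq} forces $T^\star=T$; the only point to verify is then that $\star|_{\insfracid(T)}$ again takes values in $\insfracid(T)$, i.e.\ that $I^\star$ is a $T$-module whenever $I$ is (the remaining axioms of a star operation on $T$ are inherited). I would obtain this from the identity $(IJ)^\star=(I^\star J)^\star$, valid for every star operation and all fractional ideals $I,J$ because $I^\star J=\sum_{b\in J}(bI)^\star\subseteq(IJ)^\star$: taking $J=T$ and using $IT=I$ yields $I^\star=(IT)^\star=(I^\star T)^\star\supseteq I^\star T\supseteq I^\star$, hence $I^\star T=I^\star$.

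\emph{Injectivity: reduction.} Suppose $\Psi(\star_1)=\Psi(\star_2)$. A star operation on $R$ is determined by its action on $\insfracid_0(R)$, so it suffices to prove $I^{\star_1}=I^{\star_2}$ for every $I\in\insfracid_0(R)$. If $IT=I$ (in particular if $I=R$), then $I$ is a fractional ideal of $T$, and $I^{\star_1}=I^{\star_2}$ because $\star_1$ and $\star_2$ agree on $\insfracid(T)$. If $IT\neq I$, then $I$ is the canonical ideal of $R$ by Lemma~\ref{lemma:nonT}, with $\valut(I)=\valut(R)\cup\{\tau\}$ and $I^v=(R:M_R)$; moreover $\ell_R\big((R:M_R)/I\big)=|\{g\}|=1$ and $I\subseteq I^{\star_j}\subseteq I^v=(R:M_R)$ (every divisorial ideal being $\star_j$-closed), so $I^{\star_j}\in\{I,(R:M_R)\}$ for $j=1,2$. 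Thus injectivity reduces to showing that a star operation other than $d$ cannot leave the canonical ideal fixed.

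\emph{The canonical ideal, and the main obstacle.} Suppose $I^\star=I$ for a star operation $\star$ and the canonical ideal $I$. Then for every fractional ideal $B$ the ideal $(I:B)=\bigcap_{0\neq b\in B}b^{-1}I$ is an intersection of $\star$-closed ideals, hence $\star$-closed. Here I would invoke the standard fact that, $I$ being a canonical ideal, $F\mapsto(I:F)$ is an involution on the set of fractional ideals of $R$, i.e.\ $(I:(I:F))=F$ for all $F$ --- this is biduality with respect to the canonical module, every fractional ideal over the one-dimensional Cohen--Macaulay local domain $R$ being maximal Cohen--Macaulay. Consequently every fractional ideal has the form $(I:B)$, so every fractional ideal is $\star$-closed and $\star=d$. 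Applying this to $\star_1$ and $\star_2$, which are assumed $\neq d$, forces $I^{\star_1}=I^{\star_2}=(R:M_R)$, completing the proof. The step I expect to require the most care is precisely this last one: isolating (and, if it is not simply quoted, proving) that $(I:(I:F))=F$ holds for \emph{all} fractional ideals $F$, not merely the divisorial ones --- this is what makes ``$\star$ fixes the canonical ideal'' collapse to ``$\star=d$''. Everything else is formal manipulation with the results already at hand.
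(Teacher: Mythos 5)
Your proof is correct and follows essentially the same route as the paper's: Proposition~\ref{prop:Tneq} gives well-definedness, the dichotomy of Lemma~\ref{lemma:nonT} reduces injectivity to the canonical ideal, and the biduality $(I:(I:F))=F$ shows that only $d$ can close it. Your explicit check that $I^\star$ remains a $T$-module is a small refinement the paper leaves implicit, but it is not a different argument.
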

\begin{proof}
By Proposition \ref{prop:Tneq}, if $\star\neq v$ then $T=T^\star$, and thus $\star|_{\insfracid(T)}$ is a star operation on $T$; hence, $\Psi$ is well-defined. We claim that it is injective: suppose $\star_1\neq\star_2$. Then, there is an $I\in\insfracid_0(R)$ such that $I^{\star_1}\neq I^{\star_2}$. If $I$ is a $T$-module then $\Psi(\star_1)\neq\Psi(\star_2)$; suppose $I$ is not a $T$-module.

By Lemma \ref{lemma:nonT}, $I$ can only be $R$ or a canonical ideal of $R$. In the former case, $R^{\star_1}=R=R^{\star_2}$, a contradiction. In the latter case, $I^{\star_i}$ can only if $I$ or $(R:M_R)$ (since $\ell((R:M_R)/I)=1$); suppose now that $I^\star=I$ for some $\star\in\insstar(R)$. By definition of the canonical ideal, $J=(I:(I:J))$ for every ideal $J$; since $(I:L)$ is always $\star$-closed if $I$ is $\star$-closed, it follows that $\star$ must be the identity. Since $\star_1,\star_2\neq d$, we must have $I^{\star_1}=(R:M_R)=I^{\star_2}$, against the assumptions. Thus, $\Psi$ is injective.
\end{proof}

An immediate corollary of the previous proposition is that $|\insstar(R)|\leq|\insstar(T)|+2$. Our counterexample thus involves finding star operations of $T$ that do not belong to the image of $\Psi$; to do so, we restrict to the case $\ell_R(V/R)\geq 4$ or, equivalently, $|\insN\setminus\valut(R)|\geq 4$. This excludes exactly two pseudo-symmetric numerical semigroups, namely $\langle 3,4,5\rangle$ and $\langle 3,5,7\rangle$.
\begin{lemma}\label{lemma:nonGor}
Let $S$ be a pseudo-symmetric numerical semigroup, let $g:=\max(\insN\setminus S)$ and let $S':=S\cup\{g(S)\}$. If $|\insN\setminus S|\geq 4$, then there are $a,b\in(S'-M_{S'})\setminus S'$, $a\neq b$, such that $2a,2b,a+b\in S'$.
\end{lemma}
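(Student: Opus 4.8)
The plan is to distinguish two cases according to the Frobenius number $f':=\max(\insN\setminus S')$ of $S'$. Since $\tau:=g/2$ is a gap of $S$ other than $g$, it is also a gap of $S'$, so $f'\geq\tau$, and the two cases are $f'>\tau$ and $f'=\tau$. In each case I will exhibit the pair $(a,b)$ explicitly and verify the two membership conditions in the concrete form ``$x$ is a positive gap of $S'$ and $x+s\in S'$ for every $s\in S'\setminus\{0\}$'' (which is exactly $x\in(S'-M_{S'})\setminus S'$). I will also use repeatedly that $\tau+s\in S$ for every $s\in S\setminus\{0\}$; this follows from pseudo-symmetry of $S$, and in valuation form it is the equality $\valut((R:M_R))=\valut(R)\cup\{\tau,g\}$ recorded in the proof of Proposition~\ref{prop:Tneq}.

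\textbf{Case $f'>\tau$.} Take $a:=\tau$ and $b:=f'$; these are distinct gaps of $S'$. The Frobenius number $b=f'$ automatically lies in $S'-M_{S'}$, since $f'+s>f'$ for every $s\geq 1$. For $a=\tau$: if $s\in S\setminus\{0\}$ then $\tau+s\in S\subseteq S'$, while $\tau+g>g$ gives $\tau+g\in S$; hence $\tau+s\in S'$ for every $s\in S'\setminus\{0\}$. Finally $2a=g\in S'$, and $2b=2f'>g$ and $a+b=\tau+f'>g$, so $2b$ and $a+b$ lie in $S'$ because every integer exceeding $g$ belongs to $S$. Note that this case uses nothing about $|\insN\setminus S|$.

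\textbf{Case $f'=\tau$.} Then every gap of $S$ other than $g$ is $\leq\tau$, so $(\tau,g)\subseteq S$. The key step is to observe that $S$ contains no element of $\{1,\dots,\tau-1\}$: if $1\leq a\leq\tau-1$ were in $S$, then $g-a\in(\tau,g)\subseteq S$, and adding would give $g\in S$, a contradiction. Hence the gap set of $S$ is exactly $\{1,\dots,\tau\}\cup\{g\}$, so $S'=\{0\}\cup\{\tau+1,\tau+2,\dots\}$, the (maximal embedding dimension) semigroup generated by the interval $[\tau+1,2\tau+1]$, whose gap set is $\{1,\dots,\tau\}$. For any such gap $x$ and any $s\in S'\setminus\{0\}$ we have $x+s\geq 1+(\tau+1)>\tau$, so $x+s\in S'$; hence $\{1,\dots,\tau\}\subseteq(S'-M_{S'})\setminus S'$. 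The hypothesis now reads $\tau+1=|\insN\setminus S|\geq 4$, i.e.\ $\tau\geq 3$, and a direct count shows that the interval $[\lceil(\tau+1)/2\rceil,\tau]$ then contains two distinct integers $a\neq b$; for any such pair, $2a$, $2b$ and $a+b$ are all $\geq\tau+1$, hence lie in $S'$.

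I expect the second case to be the only real obstacle: the point is to recognize that $f'=\tau$ forces $S$ to be the semigroup generated by an interval, after which the statement becomes an elementary estimate. The role of the hypothesis $|\insN\setminus S|\geq 4$ is precisely to keep $[\lceil(\tau+1)/2\rceil,\tau]$ from collapsing to a single point; the two excluded semigroups $\langle 3,4,5\rangle$ and $\langle 3,5,7\rangle$ are exactly the values $\tau=1$ and $\tau=2$ of this degenerate situation.
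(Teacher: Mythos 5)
Your proof is correct and takes essentially the same route as the paper's: in your generic case the largest gap $f'$ of $S'$ equals $g-\mu$ (where $\mu$ is the multiplicity of $S$), which together with $a=\tau$ is exactly the pair the paper chooses, and your condition $f'>\tau$ is equivalent to the paper's case $g>2\mu$; likewise your degenerate case $f'=\tau$ is the paper's case $g<2\mu$, where both arguments identify $S'$ as $\{0\}\cup\{\tau+1,\tau+2,\dots\}$ and then pick two large gaps. Your write-up is somewhat more explicit about verifying $a,b\in(S'-M_{S'})\setminus S'$, but the underlying construction is the same.
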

\begin{proof}
We claim that $a:=\tau$ and $b:=g-\mu$ are the two elements we are looking for.

Since $a+M_S\subseteq S$ and $a+g>g$ (and so $a+g\in M_S$) we have $a\in(S'-M_{S'})$. Furthermore, since $|\insN\setminus S|\geq 4$, we have $g>\mu$, and thus $b+m\geq g$ for all $m\in M_{S'}$.

By the previous point, $a+m,b+m\in S'\cup\{a,b\}$ for every $m\in M_{S'}$. We always have $2a\geq g$, and thus $2a\in S'$.

If $g>2\mu$, then $a>\mu$, and so $a+b\geq g$, which implies $a+b\in S'$; moreover, also $b>\mu$, and thus $2b\in S'$.

If $g<2\mu$, then $g$ must be equal to $2\mu-2$ or to $\mu-1$; the latter case is impossible since $|\insN\setminus S|\geq 4$. Hence, $b=\mu-2$ and $a=\mu-1$. Then, $2b=2\mu-4$ and $a+b=2\mu-3$; again since $|\insN\setminus S|\geq 4$, we must have $\mu>3$, and thus $2b>a+b\geq\mu$. Furthermore, in this case $S'=\{0,\mu,\ldots\}$, and so $a+b,2b\in S'$, as claimed.
\end{proof}

\begin{prop}\label{prop:stari}
Let $K$ be the residue field of $R$, and suppose that $\ell_R(V/R)\geq 4$. There are at least $|K|+1$ star operations on $T$ that do not close $(R:M_R)$.
\end{prop}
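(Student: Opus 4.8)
The plan is to realize the required star operations as operations ``defined by overrings of $T$'', one for each line of a two-dimensional $K$-vector space lying between $T$ and $N:=(T:M_T)$. Write $S:=\valut(R)$, $S':=S\cup\{g\}=\valut(T)$, and let $\mu$ be the multiplicity of $S$. Applying Lemma~\ref{lemma:nonGor} to $S$ produces $a:=\tau$ and $b:=g-\mu$ in $(S'-M_{S'})\setminus S'$, with $a\neq b$ and $2a,2b,a+b\in S'$. Fix $x\in(R:M_R)$ with $\valut(x)=a$ and $x'\in V$ with $\valut(x')=b$. I would first collect some elementary facts: since $(R:M_R)$ contains $R$ and some---hence, by the argument of Lemma~\ref{lemma:defT}, every---element of valuation $g$, it contains $T$, so $(R:M_R)=T+xT$ because $\ell_T((R:M_R)/T)=1$; and since $M_T$ is generated over $R$ by $M_R$ together with an element $y$ of valuation $g$, while all valuations occurring in $(R:M_R)\cdot y$, in $x'M_R$ and in $x'y$ are at least $g$ (so those products lie in $T$), we get $(R:M_R)\subseteq N$ and $x'\in N$. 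Set $A:=(R:M_R)=T+xT$, $B:=T+x'T$ and $C:=A+B=T+xT+x'T$.

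Next I would analyze the interval $[T,C]$. As $A$ and $B$ are distinct extensions of $T$ with $\ell_T(A/T)=\ell_T(B/T)=1$, we have $A\cap B=T$ and $\ell_T(C/T)=2$, hence $\valut(C)=S'\cup\{a,b\}$; moreover, as $x\in(R:M_R)\subseteq N$ and $x'\in N$, we have $xM_T\subseteq T$ and $x'M_T\subseteq T$, so $M_TC\subseteq T$, whence $C\subseteq N$ and $C/T$ is a two-dimensional $K$-vector space. Consequently the $T$-submodules $E$ with $T\subsetneq E\subseteq C$ and $\ell_T(E/T)=1$ correspond to the $|K|+1$ lines of $C/T$, and $A$ is one of them. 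Every such $E$ equals $T+zT$ for any $z\in E$ of valuation in $\{a,b\}$, and then $\valut(z^2)\in\{2a,2b\}$; since $2a=g$, and treating $g\geq2\mu$ and $g<2\mu$ separately (the latter forcing, by the proof of Lemma~\ref{lemma:nonGor}, $g=2\mu-2$, $\mu\geq4$ and $S'=\{0,\mu,\ldots\}$), both $2a$ and $2b$ are at least the conductor of $S'$, so that $z^2\in T$ (recall $T$ is analytically irreducible and residually rational). Thus $C$ and every such $E$ are overrings of $T$. Finally, since $A\subseteq N$, the ideal $(T:A)$ is a proper ideal of $T$ containing $M_T$, whence $(T:A)=M_T$ and $A^v=(T:M_T)=N$; the same computation gives $E^v=N$ for each such $E$.

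Now, for any overring $O$ of $T$ with $O\subseteq V$, set $\star_O\colon I\longmapsto I^v\cap IO$. Using that $O$ is a ring, one checks $I\subseteq\star_O(I)\subseteq I^v$, $\star_O(I)\cdot O=IO$, $\bigl(\star_O(I)\bigr)^v=I^v$ and $\star_O(T)=T\cap O=T$, and concludes that $\star_O$ is a star operation on $T$. Let $E_1,\ldots,E_{|K|}$ be the $T$-submodules $E$ with $T\subsetneq E\subseteq C$, $\ell_T(E/T)=1$ and $E\neq A$; I claim $\star_{E_1},\ldots,\star_{E_{|K|}},\star_C$ are $|K|+1$ distinct star operations on $T$, none of which closes $(R:M_R)$. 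Indeed, for $O\in\{E_1,\ldots,E_{|K|},C\}$ we have $AO=C$ (since $A,O\subseteq C$ and $C$ is a ring, $AO\subseteq C$; since $1\in A\cap O$ and $A+O=C$---two distinct lines span $C/T$, or else $O=C\supseteq A$---we get $AO\supseteq C$), hence $A^{\star_O}=A^v\cap AO=N\cap C=C\neq A$. For distinctness, $E_i^{\star_{E_i}}=E_i^v\cap E_i=N\cap E_i=E_i$, whereas $E_i^{\star_{E_j}}=N\cap E_iE_j=N\cap C=C\neq E_i$ for $j\neq i$, and $E_i^{\star_C}=N\cap C=C\neq E_i$; so each $E_i$ is $\star_{E_i}$-closed but closed under none of the other operations in the list. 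This produces the desired $|K|+1$ star operations.

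The step I expect to be the main obstacle is the verification that $C$, and each of its length-one extensions inside $[T,C]$, is genuinely a ring: this is exactly where the conclusion ``$2a,2b,a+b\in S'$'' of Lemma~\ref{lemma:nonGor} enters, together with the fact that elements of large enough valuation already lie in $R$ (or in $T$), and it is what forces the case split $g\geq2\mu$ versus $g<2\mu$. A second, more routine point requiring care is the colon computation $(T:(R:M_R))=M_T$ underlying $A^v=N$, and its analogue for the $E_i$.
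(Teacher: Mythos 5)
Your proof is correct and follows essentially the same route as the paper: both arguments take the valuations $a,b$ supplied by Lemma~\ref{lemma:nonGor}, form the length-one ring extensions $T+zT$ inside $(T:M_T)$ with $\valut(z)\in\{a,b\}$, and use the induced star operations $I\mapsto I^{v_T}\cap IO$ attached to these overrings, distinguishing them by which of the overrings they close. The only organizational difference is that you parametrize the overrings as the lines of the two-dimensional space $C/T$, explicitly identify $(R:M_R)$ as one of those lines and exclude it from the family, and use $\star_C$ in place of the paper's $v_T$ as the $(|K|+1)$-st operation --- a packaging that makes the verification that none of the chosen operations closes $(R:M_R)$ (namely $(R:M_R)^{\star_O}=C$) particularly transparent.
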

\begin{proof}
We first note that $(R:M_R)$ is a $T$-module. Indeed, let $x\in(R:M_R)$ and $t\in T$: then, $t=r+ay$, with $r\in R$ and $\valut(y)=g$, and so $xt=xr+axy$. Both $xr$ and $axy$ belong to $(R:M_R)$, the former because $(R:M_R)$ is a $R$-module and the latter since its valuation is at least $g$: hence, $xt\in(R:M_R)$. Thus, it makes sense to ask if a star operation on $T$ closes $(R:M_R)$. We also note that $T\subsetneq(R:M_R)\subseteq(T:M_T)$, and thus $(R:M_R)^{v_T}=(T:M_T)$ (where $v_T$ is the $v$-operation on $T$).

Let $S':=\valut(T)$: by Lemma \ref{lemma:nonGor}, we can find $a,b\in(S'-M_{S'})\setminus S'$ such that $2a,2b,a+b\in S'$. Choose $x,y\in(T:M_T)$ such that $\valut(x)=a$ and $\valut(y)=b$ (and, without loss of generality, suppose $y\notin(R:M_R)$): they exist since $\valut((T:M_T))=(S'-M_{S'})$ \cite[Proposition II.1.16]{fontana_maximality}.

Let $\{\alpha_1,\ldots,\alpha_q\}$ be a complete set of representatives of $R/M_R$ (or, equivalently, of $T/M_T$), and let $T_i:=T[x+\alpha_iy]$; then, by the choice of $\valut(x)$ and $\valut(y)$, we have $T_i=T+(x+\alpha_iy)T$, and in particular $T_i\subseteq(T:M_T)$. Define $\star_i$ as the star operation
\begin{equation*}
I\mapsto I^{v_T}\cap IT_i.
\end{equation*}
We claim that $\star_i$ closes $T_i$ but not $T_j$ for $j\neq i$.

Indeed, clearly $T_i^{\star_i}=T_i$. If $j\neq i$, then $T_iT_j$ contains both $x+\alpha_iy$ and $x+\alpha_jy$, and thus it contains their difference $(\alpha_i-\alpha_j)y$. Since $\alpha_i$ and $\alpha_j$ are units corresponding to different residues, it follows that $\alpha_i-\alpha_j$ is a unit of $R$, and thus of $T$; hence, $y\in T_iT_j$. By construction, $y\in(T:M_T)$: thus, $y\in T_i^{\star_j}$. On the other hand, $y\notin T_i$, and thus $T_i^{\star_j}\neq T_i$.

Thus, $\{\star_1,\ldots,\star_q\}$ are $q=|K|$ different star operations. Furthermore, none of them closes $(R:M_R)$, since
\begin{equation*}
(R:M_R)^{\star_i}=(T:M_T)\cap(R:M_R)T[x+a_iy]
\end{equation*}
contains $y$, while $y\notin(R:M_R)$.

To conclude the proof, it is enough to note that none of the $\star_i$ are the divisorial closure (since they close one of the $T_i$, none of which are divisorial), and thus we have another star operation that does not close $(R:M_R)$.
\end{proof}

We are now ready to show that $R$ is the desired counterexample.
\begin{teor}\label{teor:kunz}
Let $R$ be a Kunz domain with finite residue field, and suppose that $\ell_R(V/R)\geq 4$. Then, $1<|\insstar(R)|<\infty$, but $R$ is not star regular.
\end{teor}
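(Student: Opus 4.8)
The statement splits into three claims — finiteness of $\insstar(R)$, the bound $|\insstar(R)|>1$, and the existence of an overring with strictly more star operations — and I would establish them in that order. For finiteness, the point is that $(R:V)\neq 0$ makes $V$ a fractional ideal with $\ell_R(V/R)<\infty$; since the residue field $K=R/M_R$ is finite and $R$ is local, every $R$-module of finite length is a finite set (induct along a composition series, all of whose factors are $K$), so $V/R$ has only finitely many submodules. Thus $\insfracid_0(R)=\{I\in\insfracid(R):R\subseteq I\subseteq V\}$ is finite, and since a star operation is uniquely determined by its restriction to $\insfracid_0(R)$ (as recorded after Lemma \ref{lemma:defT}) while $I^\star\in\insfracid_0(R)$ for $I\in\insfracid_0(R)$, the set $\insstar(R)$ injects into the finite set of self-maps of $\insfracid_0(R)$. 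Hence $|\insstar(R)|<\infty$.

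For $|\insstar(R)|>1$ I would exhibit $d\neq v$. The short argument: $\valut(R)$ is pseudo-symmetric, hence a proper numerical semigroup with even Frobenius number $g$ and $\tau=g/2\notin\valut(R)$, whereas a proper symmetric numerical semigroup cannot have even Frobenius number (for $a=g/2$, exactly one of $a$ and $g-a=a$ would have to lie in it). So $\valut(R)$ is not symmetric, $R$ is not Gorenstein, and $d\neq v$. Alternatively, and without invoking Kunz's characterization, Lemma \ref{lemma:nonT} gives it directly: the canonical ideal $\omega$ of $R$ satisfies $\omega\neq R$ and $\omega^v=(R:M_R)\supsetneq\omega$, so $\omega$ is not divisorial.

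The substance is the failure of star regularity, for which the witnessing overring is $T=R[y]$ from Lemma \ref{lemma:defT}. The one new observation needed is that every star operation lying in the image of the injective map $\Psi\colon\insstar(R)\setminus\{d,v\}\to\insstar(T)$ of Proposition \ref{prop:Psi} closes $(R:M_R)$: indeed $(R:M_R)$ is divisorial over $R$ (because $M_R$ is), hence closed under every $\star\in\insstar(R)$, and since $(R:M_R)$ is a $T$-module (as verified in the proof of Proposition \ref{prop:stari}) this closure survives the restriction to $\insfracid(T)$. By contrast, Proposition \ref{prop:stari}, which applies because $\ell_R(V/R)\geq 4$, produces at least $|K|+1$ star operations on $T$ that do \emph{not} close $(R:M_R)$; none of these lies in the image of $\Psi$. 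As $\Psi$ is injective and $d\neq v$, its image has exactly $|\insstar(R)|-2$ elements, so
\[
|\insstar(T)|\geq\bigl(|\insstar(R)|-2\bigr)+\bigl(|K|+1\bigr)=|\insstar(R)|+|K|-1\geq|\insstar(R)|+1,
\]
the last inequality because $|K|\geq 2$. Therefore $|\insstar(T)|>|\insstar(R)|$, and $R$ is not star regular.

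I do not expect a genuine obstacle here: the heavy lifting is already done in Propositions \ref{prop:Psi} and \ref{prop:stari}, and the theorem is essentially the bookkeeping that combines them. The only load-bearing new step is the remark that membership in the image of $\Psi$ forces closure of $(R:M_R)$ — this is what makes the two counts disjoint rather than overlapping — and it follows immediately from $(R:M_R)$ being divisorial over $R$ together with its being a $T$-module.
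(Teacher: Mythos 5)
Your proof is correct, and its core — deducing $|\insstar(T)|>|\insstar(R)|$ by combining the injectivity of $\Psi$ (Proposition \ref{prop:Psi}) with the $|K|+1$ star operations of Proposition \ref{prop:stari} that fail to close $(R:M_R)$ — is exactly the paper's argument. You deviate in two places, both to your credit. First, for $1<|\insstar(R)|<\infty$ the paper simply invokes \cite[Theorem 2.5]{houston_noeth-starfinite} together with the non-Gorensteinness of $R$, whereas you give a self-contained elementary proof: finiteness because $V/R$ is a finite set (finite length over a local ring with finite residue field), so $\insfracid_0(R)$ is finite and star operations inject into its self-maps; and $d\neq v$ because a pseudo-symmetric value semigroup is not symmetric (or, alternatively, because the canonical ideal is non-divisorial by Lemma \ref{lemma:nonT}). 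Second, you make explicit the one step the paper leaves implicit when it writes $|\Psi(\insstar(R))|\leq|\insstar(T)|-|K|-1$: namely that every operation in the image of $\Psi$ closes $(R:M_R)$, since $(R:M_R)$ is divisorial over $R$ and is a $T$-module, so the image of $\Psi$ is disjoint from the set produced by Proposition \ref{prop:stari}. This is indeed the load-bearing point that makes the two counts add rather than overlap, and spelling it out improves on the paper's bookkeeping.
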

\begin{proof}
Since $K$ is a finite field and $R$ is not Gorenstein, by \cite[Theorem 2.5]{houston_noeth-starfinite} $1<|\insstar(R)|<\infty$, and the same for $T$.

By Proposition \ref{prop:Psi}, we have $|\insstar(R)|\geq 2+|\Psi(\insstar(R))|$; by Proposition \ref{prop:stari}, we have $|\Psi(\insstar(R))|\leq|\insstar(T)|-|K|-1$. Hence,
\begin{equation*}
\begin{aligned}
|\insstar(R)| & \leq 2+|\insstar(T)|-|K|-1=\\
& =|\insstar(T)|-|K|+1<|\insstar(T)|
\end{aligned}
\end{equation*}
since $|K|\geq 2$. The claim is proved.
\end{proof}

\section{The case $\valut(R)=\langle n,n+1,\ldots,2n-3,2n-1\rangle$}\label{sect:ex}
In this section, we specialize to the case of Kunz domains $R$ such that $\valut(R)=\langle n,n+1,\ldots,2n-1,2n-3\rangle=\{0,n,n+1,\ldots,2n-1,2n-3,\ldots\}$, where $n\geq 4$ is an integer. It is not hard to see that this semigroup is pseudo-symmetric, with $g=2n-2$ and $\tau=n-1$.

We note that this semigroup is pseudo-symmetric also if $n=3$, for which the number of star operations has been calculated in \cite[Proposition 2.10]{starnoeth_resinfinito}: we have $|\insstar(R)|=4$.

By Lemma \ref{lemma:nonT}, the only $I\in\insfracid_0(R)$ such that $IT\neq I$ are $R$ and the canonical ideals. From now on, we denote by $\nondiv$ the set $\{I\in\insfracid_0(R)\mid IT=I\}$; we want to parametrize $\nondiv$ by subspaces of a vector space.
\begin{lemma}\label{lemma:subsp}
Let $K$ be the residue field of $R$. Then, there is an order-preserving bijection between $\nondiv$ and the set of vector subspaces of $K^{n-1}$.
\end{lemma}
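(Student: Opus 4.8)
The plan is to identify $\nondiv$ with the lattice of $T$-submodules of $V/T$ and to show that $V/T$ is a $K$-vector space of dimension $n-1$; a choice of basis then produces the asserted bijection.

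\emph{Reduction to $T$-submodules of $V/T$.} I would first observe that $\nondiv$ is exactly the set of $T$-submodules $I$ with $T\subseteq I\subseteq V$. Indeed, if $I\in\insfracid_0(R)$ satisfies $IT=I$, then $T=RT\subseteq IT=I$, so $I$ is a $T$-module with $T\subseteq I\subseteq V$; conversely, any $T$-module $I$ with $T\subseteq I\subseteq V$ is a submodule of the finite $R$-module $V$, hence finitely generated over $R$ and a fractional ideal, and clearly $IT=I$. Consequently $I\mapsto I/T$ is a bijection from $\nondiv$ onto the set of $T$-submodules of $V/T$ that preserves and reflects inclusions.

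\emph{The quotient $V/T$ is a $K$-vector space.} The key point is that $M_T$ annihilates $V/T$, i.e.\ $M_TV\subseteq T$. Since $\valut(T)=\valut(R)\cup\{g\}=\{0\}\cup\{m\in\insN\mid m\geq n\}$, the nonzero non-units of $T$ all have valuation $\geq n$, so $M_TV$ consists of elements of valuation $\geq n$; it therefore suffices to show that every $x\in V$ with $\valut(x)\geq n$ lies in $T$. I would prove this by successive approximation, exactly as in the proof of Lemma \ref{lemma:defT}: if $n\leq\valut(x)\leq g$, choose $s\in T$ with $\valut(s)=\valut(x)$ (possible since $\valut(x)\in\valut(T)$), use residual rationality of $R$ to write the unit $x/s$ of $V$ as $c+m$ with $c\in R$ and $m\in M_V$, and note that $cs\in T$ while $\valut(x-cs)=\valut(s)+\valut(m)>\valut(x)$. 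Iterating strictly increases the valuation, so after finitely many steps it exceeds $g$, at which point the remaining summand lies in $R\subseteq T$ by \cite[Theorem, p.749]{kunz_value}; summing the finitely many terms gives $x\in T$. This successive-approximation step is the only genuinely nontrivial part of the argument.

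\emph{Conclusion.} Given $M_T(V/T)=0$, the quotient $V/T$ is a module over $T/M_T$, which is isomorphic to $K$ (the natural maps $R/M_R\to T/M_T\to V/M_V$ induced by the inclusions are injective and their composite is the isomorphism built into the definition of $R$, so all three are isomorphisms). Hence $V/T$ is a $K$-vector space and its $T$-submodules are precisely its $K$-subspaces; by the length formula its dimension is $\ell_R(V/T)=|\valut(V)\setminus\valut(T)|=|\insN\setminus(\{0\}\cup\{m\in\insN\mid m\geq n\})|=n-1$. Fixing a $K$-basis identifies the $K$-subspaces of $V/T$ with those of $K^{n-1}$ by an order-isomorphism, and composing with $I\mapsto I/T$ yields the desired order-preserving bijection. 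One point to keep in mind throughout is that $V/T$ is not a module over $V$, so one must argue via $T$-submodules and the vanishing of $M_T$ rather than multiplying cosets by arbitrary elements of $V$.
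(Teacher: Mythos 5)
Your proof is correct and follows essentially the same route as the paper: quotient by $T$, observe that $V/T$ is a $K$-vector space of dimension $n-1=\ell_R(V/T)$ whose subspaces correspond exactly to the members of $\nondiv$, and choose a basis. The paper simply asserts the $K$-vector space structure on $V/T$, whereas you supply the verification (that $M_TV\subseteq T$ via successive approximation and that $T/M_T\cong K$); this fills in detail rather than changing the argument.
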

\begin{proof}
Every $I\in\nondiv$ contains $T$. The quotient of $R$-modules $\pi:V\mapsto V/T$ induces a map
\begin{equation*}
\begin{aligned}
\widetilde{\pi}\colon\nondiv & \longrightarrow\mathcal{P}(V/T)\\
I & \longmapsto \pi(I),
\end{aligned}
\end{equation*}
where $\mathcal{P}(V/T)$ denotes the power set of $V/T$. It is obvious that $\widetilde{\pi}$ is injective.

The map $\pi$ induces on $V/T$ a structure of $K$-vector space of dimension $n-1$. If $I\in\nondiv$, then its image along $\widetilde{\pi}$ will be a vector subspace; conversely, if $W$ is a vector subspace of $V/T$ then $\pi^{-1}(W)$ will be an ideal in $\nondiv$. The claim is proved.
\end{proof}

For an arbitrary domain $D$ and a fractional ideal $I$ of $D$, the star operation \emph{generated} by $I$ is the map \cite[Section 5]{semigruppi_main}
\begin{equation*}
\star_I:J\mapsto(I:(I:J))\cap J^v=J^v\cap\bigcap_{\gamma\in(I:J)\setminus\{0\}}\gamma^{-1}I;
\end{equation*}
this star operation has the property that, if $I$ is $\star$-closed for some $\star\in\insstar(R)$ and $J$ is $\star_I$-closed, then $J$ is also $\star$-closed. If $\Delta\subseteq\insfracid(S)$, we define $\star_\Delta$ as the map
\begin{equation*}
\star_\Delta:J\mapsto\bigcap_{I\in\Delta}J^{\star_I}.
\end{equation*}
In the present case, we can characterize when an ideal is $\star_\Delta$-closed.

\begin{prop}\label{prop:tauIv}
Let $I,J\in\nondiv$ and let $\Delta\subseteq\nondiv$ be a set of nondivisorial ideals.
\begin{enumerate}[(a)]
\item\label{prop:tauIv:div} $I$ is divisorial if and only if $n-1\in\valut(I)$;
\item\label{prop:tauIv:Iv} $I^v=I\cup\{x\mid\valut(x)\geq n-1\}$;
\item\label{prop:tauIv:chiuso1} if $I,J$ are nondivisorial, then $I=I^{\star_J}$ if and only if $I\subseteq \gamma^{-1}J$ for some $\gamma$ of valuation $0$;
\item\label{prop:tauIv:chiuso2} if $I$ is nondivisorial, then $I$ is $\star_\Delta$-closed if and only if $I\subseteq \gamma^{-1}J$ for some $J\in\Delta$ and some $\gamma$ of valuation $0$.
\end{enumerate}
\end{prop}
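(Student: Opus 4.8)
The plan is to prove first an auxiliary fact that will be used repeatedly: \emph{every $x\in V$ with $\valut(x)\ge n$ lies in $T$}. This is proved by downward induction on $\valut(x)\in\{n,\dots,g\}$, exactly as in Lemma~\ref{lemma:defT}: if $\valut(x)>g$ then $x\in R$; if $\valut(x)=g$ then $x\in T$ by Lemma~\ref{lemma:defT}(a); and if $n\le\valut(x)\le 2n-3$, pick $r\in R$ with $\valut(r)=\valut(x)$ and write $x=rc+rm$ with $c$ a unit of $R$ and $m\in M_V$, so that $rc\in R\subseteq T$ while $\valut(rm)>\valut(x)$ gives $rm\in T$ by induction. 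Set $N:=\{x\mid\valut(x)\ge\tau\}$, a fractional ideal with $\valut(N)=\{\tau\}\cup\valut(R)\cup\{g\}$; the auxiliary fact says $\{x\mid\valut(x)\ge n\}\subseteq T$, and since every $I\in\nondiv$ contains $T$ it follows that $\valut(I)=\valut(R)\cup\{g\}\cup A$ for some $A\subseteq\{1,\dots,\tau\}$.

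For \ref{prop:tauIv:div} I would reduce to the semigroup $S:=\valut(R)$, using that $I\in\insfracid_0(R)$ is divisorial iff $\valut(I)$ is divisorial in $S$ \cite[Lemma II.1.22]{fontana_maximality}; so the point is that a relative ideal $E$ of the pseudo-symmetric semigroup $S$ with $S\cup\{g\}\subseteq E$ is divisorial exactly when $\tau\in E$. For the ``if'' direction: every $x\in\insN\setminus E$ is then in $\{1,\dots,\tau-1\}$, and $z:=g-x$ satisfies $z+E\subseteq S$ (each shift falls in $[n,+\infty)$ and avoids $g$, because the remaining gaps of $E$ are all $\ne x$) while $z+x=g\notin S$, whence $E=S-(S-E)$. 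For ``only if'': if $\tau\notin E$, any $z$ with $z+E\subseteq S$ forces $z\ge n$, so $z+\tau>g$ lies in $S$ and $\tau$ cannot be separated. Part \ref{prop:tauIv:Iv} then follows: from $T\subseteq I$ one gets $(R:I)\subseteq(R:T)=M_R$, so $I+N\subseteq(R:(R:I))=I^v$ (using $N\subseteq(R:M_R)$, which holds because $\valut(xm)\ge\tau+n>g$ for $x\in N$, $m\in M_R$); and $\valut(I+N)=\valut(I)\cup\{\tau\}$ contains $\tau$, hence is divisorial in $S$, so $I+N$ is divisorial, and being squeezed between $I$ and $I^v$ it must equal $I^v$.

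For \ref{prop:tauIv:chiuso1} the structural input is that, for $I$ nondivisorial, \ref{prop:tauIv:div}--\ref{prop:tauIv:Iv} give $\ell_R(I^v/I)=|\valut(I^v)\setminus\valut(I)|=1$, so there are no $R$-submodules strictly between $I$ and $I^v$. Since $I^{\star_J}=I^v\cap\bigcap_{\gamma\in(J:I)\setminus\{0\}}\gamma^{-1}J$ lies between $I$ and $I^v$, we have $I=I^{\star_J}$ iff $\gamma I^v\not\subseteq J$ for some $\gamma\in(J:I)\setminus\{0\}$; and as $I^v=I+N$ with $\gamma I\subseteq J$, this is equivalent to $\gamma N\not\subseteq J$. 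Here the auxiliary fact is decisive: for $\gamma\ne 0$, $\gamma N=\{x\mid\valut(x)\ge\valut(\gamma)+\tau\}$, which sits inside $\{x\mid\valut(x)\ge n\}\subseteq T\subseteq J$ as soon as $\valut(\gamma)\ge 1$, whereas if $\valut(\gamma)=0$ then $\gamma N$ contains elements of valuation $\tau\notin\valut(J)$ (using \ref{prop:tauIv:div} and that $J$ is nondivisorial). Thus $\gamma N\not\subseteq J$ happens precisely for units $\gamma$, which is exactly the condition $I\subseteq\gamma^{-1}J$ for some $\gamma$ of valuation $0$. Part \ref{prop:tauIv:chiuso2} is then formal: $\star_\Delta(I)=\bigcap_{J\in\Delta}I^{\star_J}$ with each $I^{\star_J}\in\{I,I^v\}$, so $\star_\Delta(I)=I$ iff $I^{\star_J}=I$ for at least one $J\in\Delta$, which by \ref{prop:tauIv:chiuso1} is the stated condition.

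I expect the real obstacle to be part \ref{prop:tauIv:chiuso1}: once the auxiliary fact is available, \ref{prop:tauIv:div} and \ref{prop:tauIv:Iv} amount to combinatorics of relative ideals of $S$, but \ref{prop:tauIv:chiuso1} requires isolating exactly when the (unique) one-step enlargement of $I$ to $I^v$ is realized inside a unit translate of $J$, and this rests on the clean dichotomy ``$\gamma N\subseteq J$ iff $\valut(\gamma)\ge 1$'', whose proof is precisely where the containment $\{x\mid\valut(x)\ge n\}\subseteq T$ is indispensable. I would therefore establish that auxiliary fact carefully before anything else.
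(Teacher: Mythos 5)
Your argument is correct, and it follows the same overall strategy as the paper --- everything is reduced to the facts that a nondivisorial $I\in\nondiv$ sits below $I^v$ with $\ell_R(I^v/I)=1$ and that divisoriality is detected by whether $n-1\in\valut(I)$ --- but several sub-arguments are genuinely different, and in one place yours is more careful than the printed proof. For \ref{prop:tauIv:div} the paper argues that a divisorial $I\supsetneq R$ must contain $(R:M_R)$ and invokes \cite[Proposition II.1.23]{fontana_maximality} for the converse, whereas you push the whole question down to the value semigroup via \cite[Lemma II.1.22]{fontana_maximality} and check the combinatorial criterion by hand; both work, and yours is more self-contained. For \ref{prop:tauIv:Iv} the paper sets $L:=I\cup\{x\mid\valut(x)\geq n-1\}$ and asserts that $L$ is a fractional ideal; taken literally this union is not an $R$-module when $I$ is nondivisorial (for $x$ of valuation $n-1$ one has $1+x\in I+N$ but $1+x\notin I\cup N$, since $\valut(1+x)=0$ and $x\notin I$), and what is really meant --- and what you prove --- is $I^v=I+N$ with $N=\{x\mid\valut(x)\geq\tau\}$, which has the stated value set $\valut(I)\cup\{n-1\}$ and length one over $I$. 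Since every later use of \ref{prop:tauIv:Iv} needs only these two consequences, your sum version is the right statement to carry forward. Your treatment of \ref{prop:tauIv:chiuso1} and \ref{prop:tauIv:chiuso2} coincides in substance with the paper's: the dichotomy ``$\gamma N\subseteq J$ iff $\valut(\gamma)\geq 1$'' is exactly the paper's observation that for $\valut(\gamma)>0$ the translate $\gamma^{-1}J$ absorbs all elements of valuation $n-1$, forcing $I^v\subseteq\gamma^{-1}J$. Finally, isolating and proving the containment $\{x\mid\valut(x)\geq n\}\subseteq T$ up front is a good decision: the paper uses it silently (``since $I$ contains every element of valuation at least $n$''), and it is indeed the load-bearing fact in both \ref{prop:tauIv:div} and \ref{prop:tauIv:chiuso1}.
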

\begin{proof}
\ref{prop:tauIv:div} If $I$ is divisorial, then (since $I\neq R$) we must have $(R:M_R)\subseteq I$; in particular, $n-1\in\valut(I)$.

Suppose $n-1\in\valut(I)$; since $I$ contains every element of valuation at least $n$, it contains also all elements of valuation $n-1$. Let $x$ be such that $\valut(x)=n-1$: then, $\valut(x+r)\geq n-1$ for every $r\in V$, and thus $x+I\subseteq I$. Hence, $I$ is divisorial by \cite[Proposition II.1.23]{fontana_maximality}.

\ref{prop:tauIv:Iv} Let $L:=I\cup\{x\mid\valut(x)\geq n-1\}$. If $n-1\in\valut(I)$, then $L=I$ and $I^v=L$ by the previous point. If $n-1\notin\valut(I)$, then (since $I$ contains any element of valuation at least $n$), $L$ is a fractional ideal of $R$ such that $\valut(L)=\valut(I)\cup\{n-1\}$; hence, it is divisorial and $\ell(L/I)=1$. It follows that $L=I^v$, as claimed.

\ref{prop:tauIv:chiuso1} Suppose $I\subseteq \gamma^{-1}J$, where $\valut(\gamma)=0$. Since $J$ is not divisorial, $n-1\notin\valut(J)=\valut(\gamma^{-1}J)$; hence, using the previous point, $I=I^v\cap \gamma^{-1}J$ is closed by $\star_J$. 

Conversely, suppose $I=I^{\star_J}$. Since $I$ is nondivisorial, there must be $\gamma\in(I:J)$, $\gamma\neq 0$ such that $I\subseteq\gamma^{-1}J$ and $I^v\nsubseteq\gamma^{-1}J$. If $\valut(\gamma)>0$, then $\gamma^{-1}J$ contains the elements of valuation $n-1$; it follows that $I^v\subseteq\gamma^{-1}J$ and thus that $I^v\subseteq I^{\star_J}$, against $I=I^{\star_J}$. Hence, $\valut(\gamma)=0$, as claimed.

\ref{prop:tauIv:chiuso2} If $I\subseteq \gamma^{-1}J$ for some $J\in\Delta$ and some $\gamma$ such that $\valut(\gamma)=0$, then $I^{\star_\Delta}\subseteq I^{\star_J}=I$, and thus $I$ is $\star_\Delta$-closed.

Conversely, suppose $I=I^{\star_\Delta}$. For every $J\in\Delta$, the ideal $I^{\star_J}$ is contained in $I^v=I\cup\{x\mid\valut(x)\geq n-1\}$; since $\ell(I^v/I)=1$, it follows that $I^{\star_J}$ is either $I$ or $I^v$. Since $I=I^{\star_\Delta}$, it must be $I^{\star_J}=I$ for some $J$; by the previous point, $I\subseteq \gamma^{-1}J$ for some $\gamma$, as claimed.
\end{proof}

An important consequence of the previous proposition is the following: suppose that $\Delta$ is a set of nondivisorial ideals in $\insfracid_0(R)$ such that, when $I\neq J$ are in $\Delta$, then $I\nsubseteq\gamma^{-1}J$ for all $\gamma$ having valuation 0. Then, for every subset $\Lambda\subseteq\Delta$, the set of ideals of $\Delta$ that are $\star_\Lambda$-closed is exactly $\Lambda$; in particular, each nonempty subset of $\Delta$ generates a different star operation.

We will use this observation to estimate the cardinality of $\insstar(R)$ when the residue field is finite.
\begin{prop}\label{prop:stimafinale}
Let $R$ be a Kunz domain such that $\valut(R)=\langle n,n+1,\ldots,2n-3,2n-1\rangle$, and suppose that the residue field of $R$ has cardinality $q<\infty$.Then,
\begin{equation*}
|\insstar(R)|\geq 2^{\frac{q^{n-2}-1}{q-1}}\geq 2^{q^{n-3}}.
\end{equation*}
\end{prop}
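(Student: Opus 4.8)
The plan is to exhibit, inside the poset $\nondiv$ ordered by ``$I\preceq J$ iff $I\subseteq\gamma^{-1}J$ for some $\gamma$ with $\valut(\gamma)=0$'', an antichain $\Delta$ consisting of \emph{nondivisorial} ideals and of cardinality $|\Delta|\ge (q^{n-2}-1)/(q-1)=1+q+\cdots+q^{n-3}$. The observation following Proposition~\ref{prop:tauIv} then gives $2^{|\Delta|}-1$ pairwise distinct maps $\star_\Lambda$ (for $\emptyset\ne\Lambda\subseteq\Delta$); each fixes some nondivisorial ideal and hence differs from $v$, so $|\insstar(R)|\ge 2^{|\Delta|}\ge 2^{(q^{n-2}-1)/(q-1)}\ge 2^{q^{n-3}}$, the last step because $1+q+\cdots+q^{n-3}\ge q^{n-3}$.

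To build $\Delta$ I work modulo $M_V^n$. Write $A:=V/M_V^n$; since $V$ is a DVR, $A$ is an Artinian local ring with principal maximal ideal $M_A=tA$, with $M_A^n=0$ and $M_A^i/M_A^{i+1}\cong K$, and $M_V^n\subseteq T\subseteq I$ for every $I\in\nondiv$. I use the ideals $I$ with $\ell_R(I/T)=1$, i.e.\ (by Lemma~\ref{lemma:subsp}) those attached to a line: such an $I$ is $T+Rx$ with $\valut(x)=j$ for a unique $j$, one has $\valut(I)=\valut(T)\cup\{j\}$, and by Proposition~\ref{prop:tauIv}\ref{prop:tauIv:div} nondivisoriality is equivalent to $j\le n-2$. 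Moreover $I/M_V^n=K\cdot 1+Kv_I$ with $v_I:=x\bmod M_V^n\in M_A^j\setminus M_A^{j+1}$, and after scaling I may take $v_I\equiv t^j\pmod{M_A^{j+1}}$, so the ideals with invariant $j$ are in bijection with the $q^{\,n-1-j}$ such $v_I$. Since $\gamma I\supseteq\gamma M_V^n=M_V^n$, and elements of valuation $\ge g$ lie in $T$ (so that $\gamma T\subseteq J\iff\gamma\in J$), the relation $\gamma I\subseteq J$ translates, with $a:=\gamma\bmod M_V^n\in A^\times$, into $a\,(K\cdot 1+Kv_I)\subseteq K\cdot 1+Kv_J$; comparing dimensions this is an equality, and every unit of $A$ arises as such an $a$.

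The heart of the matter is to solve this equality. Looking in $M_A^j/M_A^{j+1}$ shows $v_I$ and $v_J$ must have the same $j$ (equivalently $\valut(I)=\valut(J)$), so $\preceq$ never relates ideals with different invariants. Fixing $j$: from $a\in K\cdot 1+Kv_J$ and $av_I\in K\cdot 1+Kv_J$ (with $av_I\in M_A$) one gets $a=\alpha+\beta v_J$ and $av_I=\delta v_J$ with $\alpha\in K^\times$; comparing $M_A^j/M_A^{j+1}$-components forces $\delta=\alpha$, whence $\alpha(v_J-v_I)=\beta v_Iv_J$ and therefore
\[
v_J=v_I\,(1-c\,v_I)^{-1},\qquad c:=\beta/\alpha\in K.
\]
Thus $I\preceq J$ (both of invariant $j$) implies $v_J\in O(v_I):=\{\,v_I(1-cv_I)^{-1}:c\in K\,\}$. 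A short check shows $O$ is an equivalence relation on the set of $v$'s of a given invariant (composing $(1-cv_I)^{-1}$ with $(1-dv_J)^{-1}$ yields $(1-(c+d)v_I)^{-1}$, and $(1-cv_I)^{-1}$ is undone by $(1+cw)^{-1}$), and each class, being the image of a map from $K$, has at most $q$ elements. Hence the $q^{\,n-1-j}$ ideals of invariant $j$ fall into at least $q^{\,n-2-j}$ $O$-classes; choosing one ideal from each class for each $j=1,\dots,n-2$ yields an antichain $\Delta$ with $|\Delta|\ge\sum_{j=1}^{n-2}q^{\,n-2-j}=(q^{n-2}-1)/(q-1)$, which completes the argument.

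The step I expect to require the most care is the reduction modulo $M_V^n$ together with the explicit solution of $a\,(K\cdot 1+Kv_I)=K\cdot 1+Kv_J$: one must pin down the bijection between invariant-$j$ ideals and normalized elements of $M_A^j\setminus M_A^{j+1}$, justify that $\gamma I\subseteq J$ is detected in $A$ (using $M_V^n\subseteq\gamma I\cap J$, the surjectivity of $V^\times\to A^\times$, and $T\supseteq\{x:\valut(x)\ge g\}$), and verify that $O$ is an equivalence relation with classes of size at most $q$. Once these are in place, the counting is immediate.
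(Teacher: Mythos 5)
Your proof is correct and follows essentially the same route as the paper's: both pass to the Artinian quotient $A=V/M_V^n$, identify the relevant nondivisorial ideals (those of length one over $T$) with two-dimensional subspaces $Ke_0+Kv$ of $A$ not containing $e_{n-1}$, bound each equivalence class under multiplication by valuation-$0$ units by $q$ elements, and invoke the observation after Proposition \ref{prop:tauIv} to produce $2^{(q^{n-2}-1)/(q-1)}$ distinct star operations. The only difference is bookkeeping: you stratify by the invariant $j$ and sum a geometric series, making the orbits explicit via $v\mapsto v(1-cv)^{-1}$, whereas the paper counts all $(q^{n-1}-q)/(q-1)$ such subspaces at once and divides by $q$.
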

\begin{proof}
Let $L:=\{x\in V\mid\valut(x)\geq n\}$; then, $A:=V/L$ is a $K$-algebra. Let $e_1$ be an element of valuation 1, and let $e_i:=e_1^i$; then, $\{1=e_0,e_1,\ldots,e_{n-1}\}$ projects to a $K$-basis of $A$, which for simplicity we still denote by $\{e_0,\ldots,e_{n-1}\}$. The vector subspace spanned by $e_0$ is exactly the field $K$.

Since $V$ and $L$ are stable by multiplication by every element of valuation 0, asking if $\gamma I\subseteq J$ for some $I,J\in\insfracid_0(R)$ and some $\gamma$ is equivalent to asking if there is a $\overline{\gamma}\in A$ of ``valuation'' 0 such that $\overline{\gamma}\overline{I}\subseteq\overline{J}$, where $\overline{I}$ and $\overline{J}$ are the images of $I$ and $J$, respectively, in $A$. Hence, instead of working with ideals in $\insfracid_0(R)$ we can work with vector subspaces of $A$ containing $e_0$.

Furthermore, if $V$ is a vector subspace of $A$ and $\gamma$ has valuation 0, then $\gamma V$ has the same dimension of $V$; thus, if $V$ and $W$ have the same dimension, $\gamma V\subseteq W$ if and only if $\gamma V=W$. Let $\sim$ denote the equivalence relation such that $V\sim W$ if and only if $\gamma V=W$ for some $\gamma$ of valuation 0.

Let $X$ be the set of 2-dimensional subspaces of $A$ that contain $e_0$ but not $e_{n-1}$. Then, the preimage of every element of $X$ is a nondivisorial ideal.

An element of $X$ is in the form $\langle e_0,\lambda_1e_1+\cdots+\lambda_{n-1}e_{n-1}\rangle$, where at least one among $\lambda_1,\ldots,\lambda_{n-2}$ is not 0; since $\langle e_0,f\rangle=\langle e_0,\lambda f\rangle$ for all $\lambda\in K$, $\lambda\neq 0$, there are exactly $(q^{n-1}-q)/(q-1)$ such subspaces.

Let $V\in X$, say $V=\langle e_0,f\rangle$, and consider the equivalence class $\Delta$ of $V$ with respect to $\sim$. Then, $W\in\Delta$ if and only if $\gamma W=V$ for some $\gamma$; since $1\in W$, it follows that such a $\gamma$ must belong to $V$. Since $\gamma$ has valuation 0, it must be in the form $\lambda_0e_0+\lambda_1f$ with $\lambda_0\neq 0$; furthermore, if $\gamma'=\lambda\gamma$ then $\gamma^{-1}V=\gamma'^{-1}W$. Hence, the cardinality of $\Delta$ is at most $\frac{q^2-q}{q-1}=q$.

Therefore, $X$ contains elements belonging to at least
\begin{equation*}
\inv{q}\frac{q^{n-1}-q}{q-1}=\frac{q^{n-2}-1}{q-1}\geq q^{n-3}
\end{equation*}
equivalence classes; let $X'$ be a set of representatives of such classes, and let $Y$ be the preimage of $X'$ in the power set of $\insfracid_0(R)$. Then, every subset of $Y$ generates a different star operation (with the empty set corresponding to the $v$-operation); it follows that
\begin{equation*}
|\insstar(R)|\geq 2^{\frac{q^{n-2}-1}{q-1}}\geq 2^{q^{n-3}},
\end{equation*}
as claimed.
\end{proof}

For $n=4$, we can even calculate $|\insstar(R)|$.
\begin{prop}
Let $R$ be a Kunz domain such that $\valut(R)=\langle 4,5,7\rangle$, and suppose that the residue field of $R$ has cardinality $q<\infty$. Then, $|\insstar(R)|=2^{2q}+3$.
\end{prop}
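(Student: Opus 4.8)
The plan is to determine, for $n=4$, exactly which ideals in $\insfracid_0(R)$ can be $\star$-closed, using the identification of $\nondiv$ with the subspaces of $A:=V/L$ from the proof of Proposition~\ref{prop:stimafinale} (where $L=\{x\in V\mid\valut(x)\geq 4\}$, so $A\cong K[e_1]/(e_1^4)$ with $K$-basis $e_0,\dots,e_3$), and then to reduce the count of $\insstar(R)$ to a count of lower sets in an explicit poset.

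\emph{Reduction.} Since $n=4$, Proposition~\ref{prop:tauIv}(b) gives $\ell_R(I^v/I)=1$ for every nondivisorial $I\in\insfracid_0(R)$, so the action of any $\star$ on such an $I$ is just the datum of whether $I$ is $\star$-closed. The nondivisorial members of $\insfracid_0(R)$ are the canonical ideal $C$ and the set $\mathcal W$ of nondivisorial ideals lying in $\nondiv$; by the argument in the proof of Proposition~\ref{prop:Psi}, $C$ is $\star$-closed only if $\star=d$. Hence each $\star\neq d$ is determined by $\mathcal N^\star:=\{I\in\mathcal W\mid I^\star=I\}$. Writing $I\preceq J$ for ``$I\subseteq\gamma^{-1}J$ for some $\gamma\in V$ of valuation $0$'', Proposition~\ref{prop:tauIv}(c)--(d) shows that $\mathcal N^\star$ is a lower set of $(\mathcal W,\preceq)$, that $\mathcal N^{\star_\Lambda}=\Lambda$ for every lower set $\Lambda$, and hence that $\star\mapsto\mathcal N^\star$ is a bijection between $\insstar(R)\setminus\{d\}$ and the lower sets of $(\mathcal W,\preceq)$; one also checks $\star_\Lambda\neq d$ for all $\Lambda$, because any $z\in V$ with $\valut(z)=g=6$ lies in $(J:(J:C))$ for all $J\in\nondiv$ (indeed $z(J:C)\subseteq zJ$ has all valuations $\geq g$, hence lies in $T\subseteq J$), so $C^{\star_\Lambda}=(R:M_R)\neq C$. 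Therefore $|\insstar(R)|=1+\#\{\text{lower sets of }(\mathcal W,\preceq)\}$.

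\emph{The poset.} Under $\nondiv\leftrightarrow\{K\text{-subspaces of }A\text{ containing }e_0\}$, the set $\mathcal W$ consists of the subspaces $W$ with $e_0\in W$ and $e_3\notin W$ (Proposition~\ref{prop:tauIv}(a)), and $I\preceq J$ becomes ``$W\subseteq uW'$ for some unit $u\in A$''. As units preserve dimension, subspaces of equal dimension are $\preceq$-comparable only inside a common $A^\times$-orbit, so I would describe the orbits and the incidences. In dimension $1$ the only member is $\langle e_0\rangle$ (which is $T$), lying below everything. In dimension $2$, a subspace is $\langle e_0,ae_1+be_2+ce_3\rangle$ with $(a,b)\neq(0,0)$; computing the $A^\times$-action one finds $\langle e_0,v\rangle$ goes to $\langle e_0,(1+\gamma v)^{-1}v\rangle$ for $\gamma\in K$, so the orbit of $[a:b:c]$ is $\{[a:b-\gamma a^2:c-2\gamma ab+\gamma^2a^3]:\gamma\in K\}$; this produces, for $a\neq 0$, one orbit of size $q$ for each value of the invariant $c-b^2\in K$, and, for $a=0$, one singleton orbit for each $[0:1:c]$, giving $2q$ orbits in all, pairwise incomparable. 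In dimension $3$, the dual computation shows all hyperplanes containing $e_0$ but not $e_3$ form a single $A^\times$-orbit, and every $2$-dimensional $W\in\mathcal W$ lies below it, since such a $W$ is contained in some $\ker\phi$ with $\phi(e_3)\neq 0$ (otherwise $\mathrm{Ann}(W)\subseteq\mathrm{Ann}(e_3)$, forcing $e_3\in W$). So, modulo orbit equivalence, $(\mathcal W,\preceq)$ is a least element, an antichain of $2q$ elements above it, and a single greatest element above all of those.

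\emph{Counting.} A lower set of this poset either contains the greatest element, and is then forced to be everything, or does not, and is then either empty or the least element together with an arbitrary subset of the $2q$-antichain; this gives $1+(1+2^{2q})=2^{2q}+2$ lower sets, whence $|\insstar(R)|=1+(2^{2q}+2)=2^{2q}+3$. I expect the dimension-$2$ orbit computation to be the main obstacle: one must carry out the action of $A^\times=(K[e_1]/(e_1^4))^\times$ on the planes through $e_0$, recognize $c-b^2$ (together with the exceptional family $a=0$) as a complete invariant, and thereby get exactly $2q$ orbits. The remaining point needing care is the verification that $C$ is never $\star_\Lambda$-closed, so that $d$ genuinely contributes one extra star operation and the count is not off by one.
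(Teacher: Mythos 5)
Your proof is correct and follows essentially the same route as the paper's: both reduce the count to the $A^\times$-orbits of the subspaces of $A\cong K[e_1]/(e_1^4)$ containing $e_0$ but not $e_3$, find $2q$ pairwise incomparable orbits in dimension $2$ and a single dimension-$3$ orbit dominating everything, and conclude $|\insstar(R)|=2^{2q}+3$. Your packaging via lower sets of $\preceq$, the invariant $c-b^2$ for the dimension-$2$ orbits (the paper's $\lambda_3-\lambda_2^2$ in disguise), and the annihilator argument placing every plane under a hyperplane are tidier versions of the same computations, and your explicit check that no $\star_\Lambda$ closes a canonical ideal correctly supplies the extra ``$+1$'' coming from $d$.
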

\begin{proof}
Consider the same setup of the previous proof. We start by claiming that two vector subspaces $W_1,W_2$ of $A$ of dimension 3 that contain $e_0$ but not $e_3$ are equivalent under $\sim$.

Indeed, any such subspace must have a basis of the form $\{e_0,e_1+\theta_1e_3,e_2+\theta_2e_3\}$, and different pairs $(\theta_1,\theta_2)$ induce different subspaces; let $W(\theta_1,\theta_2):=\langle e_0,e_1+\theta_1e_3,e_2+\theta_2e_3\rangle$. To show that two such subspaces are equivalent, we prove that they are all equivalent to $W(0,0)$. Let $\gamma:=e_0-\theta_2e_1-\theta_1e_2$: we claim that $\gamma W(\theta_1,\theta_2)=W(0,0)$. Indeed, $\gamma e_0=\gamma\in W(0,0)$; on the other hand,
\begin{equation*}
\gamma(e_1+\theta_1e_3)=e_1+\theta_1e_3-\theta_2e_2-\theta_1e_3=e_1-\theta_2e_2\in W(0,0),
\end{equation*}
and likewise
\begin{equation*}
\gamma(e_2+\theta_2e_3)=e_2+\theta_2e_3-\theta_2e_3=e_2\in W(0,0).
\end{equation*}
Hence, $W(\theta_1,\theta_2)\sim W(0,0)$.

\medskip

Consider now the set $\Delta$ of nondivisorial ideals in $\insfracid_0(R)$. By Lemma \ref{lemma:defT} and Proposition \ref{prop:tauIv}, $\Delta$ is equal to the union of the set of the canonical ideals and the set $\nondiv$ of the $I\in\insfracid_0(R)$ such that $IT=T$. By Lemma \ref{lemma:subsp} and Proposition \ref{prop:tauIv}, the elements of the latter correspond to the subspaces of $V/T$ containing $e_0$ but not $e_3$: hence, we can write  $\nondiv=\nondiv_1\cup\nondiv_2\cup\nondiv_3$, where $\nondiv_i$ contains the ideals of $\nondiv$ corresponding to subspaces of dimension $i$.

Given $\star\in\insstar(R)$, let $\Delta(\star):=\{I\in\Delta\mid I=I^\star\}$. We claim that $\Delta(\star)$ is one of the following:
\begin{itemize}
\item $\Delta$;
\item $\Delta\setminus\{J\}$;
\item $\Lambda\cup\{T\}$ for some $\Lambda\subseteq\nondiv_2$;
\item the empty set.
\end{itemize}

By Proposition \ref{prop:Tneq}, if $T\neq T^\star$ (i.e., if $T\notin\Delta(\star)$) then $\star=v$, and $\Delta(\star)=\emptyset$.

If $\Delta(\star)$ contains a canonical ideal then $\star$ is the identity, and thus $\Delta(\star)=\Delta$.

If $I$ is $\star$-closed for some $I\in\nondiv_3$, then every element of $\nondiv_3$ must be closed, since any other $I'\in\nondiv_3$ is in the form $\gamma I$ for some $\gamma$ of valuation 0 (by the first part of the proof); furthermore, every element of $\nondiv_2$ is the intersection of the elements of $\nondiv_3$ containing it, and thus it is $\star$-closed. It follows that $\Delta(\star)=\Delta\setminus\{J\}$; in particular, there is only one such star operation.

Let $\star$ be any star operation different from the three above. Then, $\Delta(\star)$ must contain $T$ and cannot contain any canonical ideal nor any element of $\nondiv_3$. Hence, $\Delta(\star)$ must be equal to $\Lambda\cup\{T\}$ for some $\Lambda\subseteq\nondiv_2$. Moreover, $\Lambda\cup\{T\}$ is equal to $\Delta(\star)$ for some $\star$ if and only if $\Lambda$ is the (possibly empty) union of equivalence classes under $\sim$. It follows that $|\insstar(R)|=2^x+3$, where $x$ is the number of such equivalence classes.

\medskip

By the proof of Proposition \ref{prop:stimafinale}, the image of an element of $\nondiv_2$ is in the form $\langle e_0,f\rangle$, where $f=\lambda_1e_1+\lambda_2e_2+\lambda_3e_3$ with at least one between $\lambda_1$ and $\lambda_2$ nonzero. Let $V(\lambda_1,\lambda_2,\lambda_3)$ denote the subspace $\langle e_0,f\rangle$; clearly, $V(\lambda_1,\lambda_2,\lambda_3)=V(c\lambda_1,c\lambda_2,c\lambda_3)$ for every $c\in K\setminus\{0\}$. The subspaces equivalent to $V$ must have the form $(e_0+\theta f)^{-1}V$ for some $\theta\in K$, and, by using the basis $\{e_0,e_0+\theta f\}$ of $V$, we see that $(e_0+\theta f)^{-1}V(\lambda_1,\lambda_2,\lambda_3)=\langle e_0,(e_0+\theta f)^{-1}\rangle$. If $\theta=0$, then $e_0+\theta f=e_0$, and thus $(e_0+\theta f)^{-1}V(\lambda_1,\lambda_2,\lambda_3)=V(\lambda_1,\lambda_2,\lambda_3)$; suppose, from now on, that $\theta\neq 0$.

To calculate $(e_0+\theta f)^{-1}=e_0+\alpha_1e_1+\alpha_2e_2+\alpha_3e_3$, we can simply expand the product $(e_0+\theta f)(e_0+\alpha_1e_1+\alpha_2e_2+\alpha_3e_3)$, using $e_i=0$ for $i>3$, and then impose that the coefficients of $e_1$, $e_2$ and $e_3$ are zero; we obtain
\begin{equation*}
\begin{cases}
\alpha_1=-\theta\lambda_1\\
\alpha_2=-\theta(\lambda_1\alpha_1+\lambda_2)\\
\alpha_3=-\theta(\lambda_1\alpha_2+\lambda_2\alpha_1+\lambda_3).
\end{cases}
\end{equation*}
Since $\theta\neq 0$, the set $\{e_0,(e_0+\theta f)^{-1}-e_0\}$ is a basis of $(e_0+\theta f)^{-1}V(\lambda_1,\lambda_2,\lambda_3)$; hence, $(e_0+\theta f)^{-1}V(\lambda_1,\lambda_2,\lambda_3)=V(\alpha_1,\alpha_2,\alpha_3)$. We distinguish two cases.

If $\lambda_1=0$, then $\lambda_2\neq 0$, and so we can suppose $\lambda_2=1$. Then, we have
\begin{equation*}
\begin{cases}
\alpha_1=0\\
\alpha_2=-\theta\\
\alpha_3=-\theta\lambda_3.
\end{cases}
\end{equation*}
and so $(e_0+\theta f)^{-1}V(0,1,\lambda_3)=V(0,-\theta,-\theta\lambda_3)=V(0,1,\lambda_3)$ since $\theta\neq 0$. It follows that the only subspace equivalent to $V(0,1,\lambda_3)$ is $V(0,1,\lambda_3)$ itself; since we have $q$ choices for $\lambda_3$, this case gives $q$ different equivalence classes.

If $\lambda_1\neq 0$, we can suppose $\lambda_1=1$. Then, we get
\begin{equation*}
\begin{cases}
\alpha_1=-\theta\\
\alpha_2=-\theta(\alpha_1+\lambda_2)=-\theta(-\theta+\lambda_2)\\
\alpha_3=-\theta(-\theta(-\theta+\lambda_2)-\theta\lambda_2+\lambda_3).
\end{cases}
\end{equation*}
Since $\theta\neq 0$, we can divide by $-\theta$, obtaining 
\begin{equation*}
(e_0+\theta f)^{-1}V(1,\lambda_2,\lambda_3)=V(1,-\theta+\lambda_2,\theta^2-2\theta\lambda_2+\lambda_3).
\end{equation*}
Since $-\theta+\lambda_2\neq-\theta'+\lambda_2$ if $\theta\neq\theta'$, we have $(e_0+\theta f)^{-1}V(1,\lambda_2,\lambda_3)\neq(e_0+\theta' f)^{-1}V(1,\lambda_2,\lambda_3)$ for all $\theta\neq\theta'$; thus, every equivalence class is composed by $q$ subspaces. Since there are $q^2$ such subspaces, we get other $q$ equivalence classes.

Therefore, $\nondiv_2$ is partitioned into $2q$ equivalence classes, and so $|\insstar(R)|=2^{2q}+3$, as claimed.
\end{proof}

\begin{oss}
~\begin{enumerate}
\item The estimate obtained in Proposition \ref{prop:stimafinale} grows very quickly; for example, if $q$ is fixed, it follows that the double logarithm of $|\insstar(R)|$ grows (at least) linearly in $n=\ell(V/R)+1$. This should be compared with \cite[Theorem 3.21]{starnoeth_resinfinito}, where the authors analyzed a case where the growth of $|\insstar(R)|$ was linear in $\ell(V/R)$.
\item Thanks to Theorem \ref{teor:kunz}, Proposition \ref{prop:stimafinale} also gives lower bounds for the cardinality of the set of star operations of $T:=R\cup\{x\in V\mid\valut(x)=2n-2\}$. If $V=K[[X]]$ is the ring of power series, then $T$ will be equal to $K+X^nK[[X]]$. In particular, for $n=4$, we have $|\insstar(T)|\geq 2^{2q}+3$, which is an estimate pretty close to the precise cardinality of $\insstar(T)$, namely $2^{2q+1}+2^{q+1}+2$ \cite[Corollary 4.1.2]{white-tesi-sgr}.

\end{enumerate}
\end{oss}

\end{document}